\newtheorem{problem}{Problem}
\newtheorem{theorem}{Theorem}
\theoremstyle{remark}
\newtheorem{remark}{Remark}
\newcommand{\dx}[0]{\,\mathrm{d}\boldsymbol{x}}
\newcommand{\Ch}[0]{\mathcal{C}_h}
\newcommand{\Eh}[0]{\mathcal{E}_h}
\newcommand{\Div}[0]{\mathrm{div}\,}
\newcommand{\Eop}[1]{\mathcal{A}(#1)}
\newcommand{\Eoph}[1]{\mathcal{A}_h(#1)}
\newcommand{\vertiii}[1]{{\left\vert\kern-0.25ex\left\vert\kern-0.25ex\left\vert #1 
    \right\vert\kern-0.25ex\right\vert\kern-0.25ex\right\vert}}
\newcommand{\enorm}[1]{\vertiii{#1}}
\newcommand{\edual}[1]{\vertiii{#1}_*}
\newcommand{\edualK}[1]{\vertiii{#1}_{*,K}}
\newcommand{\Bf}[0]{\mathcal{B}}
\newcommand{\Lf}[0]{\mathcal{L}}
\begin{document}

\title{An adaptive finite element method for the inequality-constrained Reynolds equation}
\begin{abstract}
We present a stabilized finite element method for the numerical solution of cavitation in lubrication,  modeled as an inequality-constrained Reynolds equation. The cavitation model is written as  a variable coefficient saddle-point problem and approximated by a residual-based stabilized method. Based on our recent results on the classical obstacle problem, we present optimal a priori estimates and derive novel a posteriori error estimators.

The method is implemented as a Nitsche-type finite element technique and shown in numerical computations to be superior to the usually applied penalty methods. 
\end{abstract}
\author[aalto]{Tom~Gustafsson}
\author[texasam]{Kumbakonam R.~Rajagopal}
\author[aalto]{Rolf~Stenberg}
\author[ist]{Juha~Videman}

\address[aalto]{Aalto University, Department of Mathematics and Systems Analysis, P.O. Box 11100 FI-00076 Aalto, Finland}
\address[texasam]{Department of Mechanical Engineering, Texas A\&M University, 3123 College Station, TX 77843-3123, USA}
\address[ist]{CAMGSD and Mathematics Department, Instituto Superior T\'ecnico, Universidade de Lisboa, Av. Rovisco Pais 1, 1049-001 Lisboa, Portugal}

\maketitle

\section{Introduction}


Friction is an ever present phenomenon between solid surfaces that are
in contact and move with respect to each other. In order to reduce the
wear and tear of such sliding surfaces
one uses a lubricant.

The lubricant film thickness is often small in comparison to the
characteristic dimensions of the lubricated object. Assuming that the
flow in such thin films can be modeled by the Navier--Stokes fluid,
Reynolds obtained an approximation to the equations of
motion~\cite{reynolds1886theory}. 
This approximation, known as the  Reynolds equation, has
proved to be invaluable, however  the equations have unfortunately also
been misused and misapplied to various problems wherein the basic
assumptions that Reynolds made are not met. In some cases,
generalizations have been made
that are incorrect (see the discussion in Rajagopal
and Szeri~\cite{RS03}).

 Although liquids can sustain negative pressures to some extent, 
 it is of particular interest to
model the  cavitation phenomenon which is known to occur in
tribological environments under strong sub-atmospheric pressures,
cf.~Braun--Hannon~\citep{braun2010cavitation} for a fairly recent
review on the subject from a modeling perspective. The described
physical conditions result in a rupture of the lubricant film causing
the formation of cavities that are filled by a mixture of gas and
vapor, cf. the experimental results and pictures, {\sl e.g.}, in Xing et al.~\citep{xing2009three}.

A direct application of the Reynolds equation in the
full 360$^\circ$ journal bearing geometry leads to a pressure field with
equal positive and negative pressure spikes in the converging and
diverging regions,
respectively. 
As this obviously cannot hold true
for large negative pressures due to cavitation, different
attempts have been made to incorporate cavitation  into the Reynolds
equation.

The most rudimentary approach for taking into account the resulting
film rupture---often accredited to
G\"umbel~\citep{gumbel1914problem}---is to simply compute the pressure
field $p$ from the standard Reynolds equation and
truncate it wherever it goes below a predetermined constant cavitation pressure value. Another possibility, put forward by
Swift~\cite{swift1932stability} and
Stieber~\cite{stieber1933schwimmlager}, is to impose the following
cavitation zone formation conditions at the cavitation boundary:
\begin{equation}
  \label{eq:swiftstieber}
  \frac{\partial p}{\partial x} = \frac{\partial p}{\partial y} = 0, \qquad p = p_c.
\end{equation}
From a mathematical perspective these conditions transform the model into a
free boundary problem that can be formulated as a variational
inequality~\cite{rodrigues1987obstacle}.

The Swift--Stieber or the variational inequality model has been
criticized for not properly modeling the starvation phenomena
where the cavitation region extends to the converging side
of the flow region, cf.
Bayada--V\'azquez~\citep{bayada2007survey}. This has led to the development of
alternative Reynolds-type cavitation models, see, {\sl e.g.}, ~Elrod--Adams~\citep{elrod1974computer},
Vijayaraghavan--Keith~\citep{vijayaraghavan1989development}, Bayada et
al.~\citep{bayada1990variational,
bayada2001finite, bayada2005average, bayada1983freebdry}, Almqvist et
al.~\citep{almqvist2014new}, Garcia et al.~\citep{garcia2017piezo},
Mistry et al.~\citep{biswas1997new}, for
different models and their applications. It has also been recently shown, within the context of fluids with pressure dependent viscosity, by Lanzend\"orfer et al.~\cite{LMR17}  that a solution based on using a cut-off value for the pressure to determine the region of cavitation is sensitive to the value of the cut-off.
Nevertheless, it is obvious that any cavitation model  derived from continuum mechanics involves an initially
unknown region of cavitation and therefore it is mandatory to consider numerical methods that
automatically capture the location of the cavitation region and adapt to the resulting free boundaries.

As a step towards better numerical modeling of cavitation, we study
the numerical solution of the Reynolds model with Swift--Stieber
cavitation conditions using adaptive finite element
techniques. Similarly to our recent work on the obstacle
problem~\citep{GSV17}, the positivity constraint of the pressure field
is imposed using the Lagrange multiplier technique instead of the more
traditional penalty method since adaptive penalty methods easily lead
to over-refinement in the cavitation area~\citep{juntunen2009nitsche,
sorsimo2012finite}.  The resulting variational inequality has a
saddle-point structure and must be solved either by using a carefully
designed pair of mixed finite element spaces or by a stabilized method in
the spirit of Hughes--Franca~\citep{hughes1987new}. In this work, we
choose the latter approach since  the discontinuous Lagrange multiplier can be eliminated
elementwise from  the discrete linearized problem, cf. Section~\ref{implementation}.  The resulting formulation involves only positive
definite linear systems that are fast to assemble~\citep{GSV17}.

Cavitation is a very daunting
problem which is yet to be thoroughly  addressed.
In fact,
if one were to carry out a meaningful study of the problem, one should  adopt a fully thermodynamic
approach as there can be dissipation in the viscous lubricant 
leading to an increase of the lubricant temperature. In certain
applications, this increase can be significant thereby
leading to a change in the viscosity.
Complicating matters further is the fact that the cavity is not
necessarily a vacuum but contains the vapor of the lubricant and
possibly trapped air, in other words a material for which one needs a thermodynamic equation of
state. Temperature can have a major
role in the initiation of cavitation and also in determining the vapor
pressure within the cavity. Moreover, one may need to take into account
the surface tension at the boundary of the cavity and  solve 
the interaction problem between the fluid and the cavity that is constantly
changing with space and time.  Furthermore, the state of the fluid
prior to the initiation of cavitation can be such that one might
have to take compressibility effects into consideration. Of course,
one cannot account for all these issues simultaneously as the
problem would become intractable. A sensible approach is  to
consider first the simplest approximation that yet captures the
quintessential features of the problem, and  then add more
of the physically relevant issues. This is precisely the approach that
we adopt in this work. 

For previous studies of adaptive finite element techniques applied to
cavitation modeling, we refer to Wu--Oden~\citep{wu1987note},
Nilsson--Hansbo~\citep{nilsson2007adaptive} and Sorsimo et
al.~\citep{sorsimo2012finite}. In all these works, 
the classical penalty method was used to enforce the constraint $p \geq
p_c$. The error estimator in the early study of
Wu--Oden~\citep{wu1987note} was based on the simplistic idea of
minimizing the large gradients present in the solution. The more recent works
of Nilsson--Hansbo~\citep{nilsson2007adaptive} and Sorsimo et
al.~\citep{sorsimo2012finite}  were built upon the a posteriori estimates
derived by Johnson~\citep{johnson1992adaptive} for the obstacle
problem in the penalty formulation.

Some shortcomings of the penalty method approach, when applied to the obstacle
problem of an elastic membrane, are discussed
in~\citep{gustafsson2017obstacle}.  They include:
\begin{itemize}
    \item Nonconformity of the  method. This means that the
          convergence rate is optimal 
          for linear elements only.
    \item  Condition numbers of the resulting linear systems increase faster
          than for standard finite elements of the same polynomial order.
    \item The method can lead to over-refinement if the penalty parameter
          is not chosen appropriately.
\end{itemize}
As shown in our earlier works~\citep{GSV17,
gustafsson2017obstacle}, none of these issues arise from the
stabilized finite element formulation of  variational inequalities. Thus, the
main goal of this work is to investigate the suitability of stabilized finite
element methods for the approximation of the Reynolds' cavitation model and, at
the same time, establish novel a posteriori error estimators for variable
coefficient elliptic variational inequalities. To our knowledge, this is the
first time when suitably modified residual based a posteriori error estimators
and stabilizing terms have been presented for variable coefficient saddle-point
problems, see~\cite{BV2000, DW2000} for similar a posteriori error analyses of
elliptic equations. 


\begin{figure}[h]
    \centering
    \begin{tikzpicture}
        \draw (0,0) -- (6,0) -- (6,4) -- (0,4) -- (0,0);
        \draw (6,2) node[anchor=west] {$\Gamma_2$};
        \draw (0,2) node[anchor=east] {$\Gamma_4$};
        \draw (3,4) node[anchor=south] {$\Gamma_3$};
        \draw (3,0) node[anchor=north] {$\Gamma_1$};
        \draw[-latex] (0.1,0.1) -- (1,0.1) node[above] {$\theta$};
        \draw[-latex] (0.1,0.1) -- (0.1,1) node[right] {$y$};
    \end{tikzpicture}
    \caption{Let $p$ denote the pressure field. Typical boundary conditions for
        the pressure field in the full periodic 360$^\circ$ journal bearing geometry are
        $p|_{\Gamma_1}=p|_{\Gamma_3}=0$ (constant atmospheric pressure) and
        $p(\theta,y)=p(\theta+2\pi,y)$, $\theta \in \Gamma_4$ (periodic
        boundary condition). Moreover, for the periodic boundaries we have
        $\frac{\partial p}{\partial \theta}(\theta,y) = \frac{\partial
        p}{\partial \theta}(\theta+2\pi,y)$, $\theta \in \Gamma_4$.  Sometimes
        (for example in locomotive applications) the journal is in contact with
        the bearing surface only on a part of the journal boundary. Then the
        width of the domain is less than $2\pi$ and the boundary conditions are
        simply
        $p|_{\Gamma_i}=0$, $i \in \{1,2,3,4\}$.}
    \label{fig:journalbearingbc}
\end{figure}
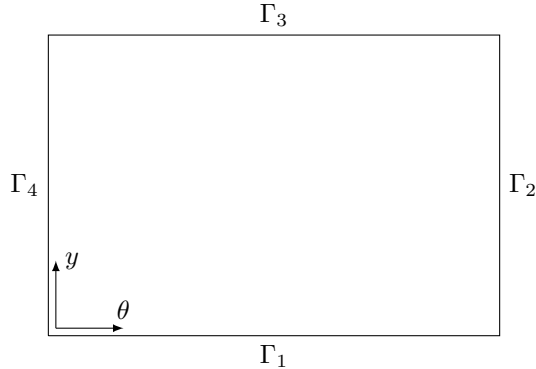

\section{Reynolds equation}

Derivation of the Reynolds equation from the Navier--Stokes
equations is standard and can be found, {\sl e.g.}, in~\citep{szeri2005fluid,
hamrock2004fundamentals}. The assumptions  are
the following:
\begin{itemize}
  \item The lubricant is homogeneous, incompressible and Newtonian, i.e.~the Navier--Stokes equations are valid.
  \item The flow takes place between two almost parallel surfaces with
        the distance between the surfaces given by a function $d :
        \mathbb{R}^2 \rightarrow \{x \in \mathbb{R} : x > 0 \}$.
  \item The fluid film thickness $d$ is small with respect to the characteristic lengths along other directions.
  \item The curvature of the surfaces is negligible.
  \item The flow is slow enough or the viscosity $\mu$ high enough so that the lubricant inertia can be ignored.
\end{itemize}
In this work, we consider steady-state flow regime and assume that the
viscosity $\mu$ remains constant. Note, however, that our results are valid
also for the equation arising from a fixed-point linearization of the Reynolds
equation with a moderate pressure-dependency of $\mu$ as shown
in~\citep{gustafsson2015nonlinear}. Compressibility, curvature and inertia
effects can be taken into account by a suitable modification of the Reynolds
equation, for the latter two we refer to \cite{NV07}.

Let $\Omega \subset \mathbb{R}^2$~be the computational domain representing 
both the lubricated and the cavitated region. Assume that the relative velocity between the surfaces
has magnitude $U$ and the direction is (locally) parallel to the $x$-axis. 
The Reynolds equation for the unknown pressure field $p : \Omega \rightarrow \mathbb{R}$
reads as
\begin{equation}
  \label{eq:reynolds}
  -\Div \left( \frac{d^3}{\mu} \nabla p\right) = -6 U \frac{\partial d}{\partial x}.
\end{equation}
Equation \eqref{eq:reynolds} is complemented  with appropriate boundary conditions for  $p$.

\begin{remark}
In the journal bearing geometry,
we have $x = R\theta$, $U = R \omega$ and $d(\theta) = c_1 (1 + \varepsilon
\cos (\theta-\varphi))$ where $R$ is the radius of the bearing, $\omega$ is the
angular velocity and $(c_1,\varepsilon)$ are geometrical parameters;
$\varepsilon \in (0,1)$ is the eccentricity of the bearing, $c_1$ is the
minimum film thickness and $\varphi$ determines the location of the minimum film
thickness (due to external loading on the journal).  The Reynolds equation becomes
\begin{equation}
  \label{eq:reynoldsjournal}
  -\frac{\partial}{\partial \theta}\left( d^3 \frac{\partial p}{\partial \theta}\right) - R^2 \frac{\partial}{\partial y}\left( d^3 \frac{\partial p}{\partial y}\right) = -6 \mu \omega R^2 \frac{\partial d}{\partial \theta} \quad \text{in $\Omega$},
\end{equation}
where
\[
\Omega =\{ (\theta,y) \, :\, 0\leq \theta \leq \psi, \quad 0\leq y \leq L \}\, .
\]
Here, $L$ is the width of the finite bearing and $0 < \psi \leq 2 \pi$ defines
the extent of the bearing surface. The boundary conditions  are shown in
Figure~\ref{fig:journalbearingbc} and are different for $\psi = 2\pi$ and $\psi
< 2\pi$.
\end{remark}

\begin{remark}
  The Reynolds equation \eqref{eq:reynoldsjournal} can be normalized
 defining the non-dimensionalized variables
  \begin{equation}
    \overline{d} = \frac{d}{c_1}, \quad \overline{y} = \frac{y}{L}, \quad \overline{p} = \frac{p}{\mu \omega}\left( \frac{c_1}{R}\right)^2.
  \end{equation}
  This leads to
  \begin{equation}
    -\frac{\partial}{\partial \theta} \left( \overline{d}^3 \frac{\partial \overline{p}}{\partial \theta} \right) - \left( \frac{R}{L} \right)^2 \frac{\partial}{\partial \overline{y}}\left( \overline{d}^3 \frac{\partial \overline{p}}{\partial \overline{y}} \right) = - 6 \frac{\partial \overline{d}}{\partial \theta}.
  \end{equation}
\end{remark}


The Swift--Stieber cavitation model corresponds to imposing the constraint $p
\geq p_c$ where $p_c$ is the cavitation pressure.  The momentum balance
\eqref{eq:reynolds} holds true only in the lubricated region where $p$ is
strictly greater than $p_c$. In the cavitation region  $p=p_c$. Hence, the
governing equations read as
\begin{alignat}{2}
    -\Eop{p} &\geq f, \quad && \text{in $\Omega$,} \label{cavmodel1} \\ 
    p-p_c &\geq 0, \quad  && \text{in $\Omega$,} \label{cavmodel2}  \\
    (\Eop{p} +f)(p-p_c) &= 0,   \quad && \text{in $\Omega$,}  \label{cavmodel3}  \,  \\
\end{alignat}
where we have set $f=-6\mu U \frac{\partial d}{\partial x}$ and defined
\begin{equation}
  \Eop{p} = \Div(d^3 \nabla p).
\end{equation}
The boundary conditions are
\begin{align}
    p|_{\Gamma_1 \cup \Gamma_3}&=0,\label{cavmodel4}  \\
    p|_{\Gamma_{2}}&=p|_{\Gamma_{4}}, \label{cavmodel5}  \\ 
    (\nabla p \cdot \boldsymbol{n})|_{\Gamma_{2}}&=(\nabla p \cdot \boldsymbol{n})|_{\Gamma_{4}}, \label{cavmodel6} 
\end{align}
for the full 360$^\circ$ journal bearing and
\begin{align}
    p|_{\partial \Omega}&=0,\label{cavmodel7}
\end{align}
for smaller contact angles, see the explanations in the caption of
Figure~\ref{fig:journalbearingbc}.  Note that the cavitation zone formation
conditions \eqref{eq:swiftstieber} are implicitly satisfied inside and on the
boundary of the cavitation region.

In the following chapters we formulate the variational inequality using the
simpler boundary condition \eqref{cavmodel7} although everything holds true
also in the case of a periodic boundary condition.

\section{Variational inequality and its discretization}

Aiming at solving the  problem \eqref{cavmodel1}--\eqref{cavmodel6} by the finite element method, we first write it
in a variational form. A method best suited to adaptive solution strategies includes an additional unknown  $\lambda :\Omega \rightarrow \{ x \in \mathbb{R} : x \geq 0 \}$, a Lagrange multiplier function positive
in the cavitation region and zero elsewhere. The 
equations \eqref{cavmodel1}--\eqref{cavmodel3}  are then transformed to
\begin{align}
    \label{eq:lagstrong1}-\Eop{p} - \lambda &= f, \\
    \label{eq:lagstrong2}p-p_c &\geq 0, \\
    \label{eq:lagstrong3}(p-p_c)\lambda &= 0. 
\end{align}

Let us introduce the variational spaces 
\begin{align}
    Q &= \{ q \in H^1(\Omega) : q|_{\partial \Omega}=0 \}, \\
    \varLambda &= \{ \mu \in H^{-1}(\Omega) : \langle q, \mu \rangle \geq 0 ~\forall q \in Q,~q \geq 0 ~\text{a.e.~in $\Omega$} \},
\end{align}
equipped with the energy norm $\enorm{\cdot}$ and  its  dual norm $\edual{\cdot}$ defined by
\begin{equation}
  \enorm{r}^2 = \int_\Omega d^3 (\nabla r)^2 \dx \quad \text{and} \quad \edual{\xi} = \sup_{q \in Q} \frac{\langle q, \xi \rangle}{\enorm{q}}\, ,
\end{equation}
where $\langle \cdot, \cdot \rangle : Q \times H^{-1}(\Omega)
\rightarrow \mathbb{R}$ denotes the duality pairing.
Defining the bilinear and linear forms through
\begin{align*}
  \Bf(r,\xi; q, \mu) &= \int_\Omega d^3 \nabla r \cdot \nabla q \dx - \langle q, \xi \rangle - \langle r, \mu \rangle, \\
  \Lf(q, \mu) &= \int_\Omega f q \dx - \langle p_c, \mu \rangle \, ,
\end{align*}
we arrive at the following variational inequality formulation of  problem \eqref{eq:lagstrong1}--\eqref{eq:lagstrong3}, \eqref{cavmodel4}--\eqref{cavmodel6}.
\begin{problem}[Continuous variational problem]
    Find $(p,\lambda) \in Q \times \varLambda$ satisfying
    \begin{equation}
      \label{eq:lagweak} \Bf(p,\lambda; q, \mu-\lambda) \leq \Lf(q, \mu-\lambda)
    \end{equation}
    for every $(q,\mu) \in Q \times \varLambda$.
    \label{prob:cont}
\end{problem}

\begin{remark}
  Formulation
    \eqref{eq:lagweak} can also be derived by applying
    the method of Lagrange multipliers to the  constrained
    energy minimization problem
    \begin{equation}
    \inf_{q \in Q,~q\geq p_c} \frac12\enorm{q}^2 - \int_{\Omega} fq \dx.
    \end{equation}
\end{remark}

 The following stability estimate guarantees the well-posedness of Problem~\ref{prob:cont}.  The stability result of Theorem \ref{contstab} is
also crucial for the a posteriori error analysis presented in the
next section.

\begin{theorem}[Continuous stability]
  For all $(r,\xi) \in Q \times H^{-1}(\Omega)$ there exists $q \in Q$ such that with some constants $C_1,C_2>0$ it holds
  \begin{equation}
    \Bf(r,\xi; q, -\xi) \geq C_1 \left(\enorm{r} + \edual{\xi}\right)^2 \quad \text{and} \quad \enorm{q} \leq C_2 \left( \enorm{r} + \edual{\xi} \right)\, .
  \end{equation}

\label{contstab}
\end{theorem}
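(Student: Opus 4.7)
My plan is the standard construction of an inf-sup test function by perturbing $r$ in a direction that captures the dual norm of $\xi$.

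First I would compute the action of the bilinear form on the proposed pair of test functions:
\[
\Bf(r,\xi; q, -\xi) = \int_\Omega d^3 \nabla r \cdot \nabla q \dx - \langle q, \xi \rangle + \langle r, \xi \rangle,
\]
noting that the troublesome $\langle r, \xi \rangle$ terms will cancel once I compose $q$ so that the remaining contribution of $\xi$ has a definite sign. Since the energy norm $\enorm{\cdot}$ is a genuine Hilbert norm on $Q$ (assuming $d$ is bounded from below away from zero, as required by the model), I would invoke the Riesz representation theorem applied to the functional $-\xi \in H^{-1}(\Omega)= Q'$ to produce a unique $w \in Q$ satisfying
\[
\int_\Omega d^3 \nabla w \cdot \nabla v \dx = -\langle v, \xi \rangle \quad \text{for all } v \in Q, \qquad \enorm{w} = \edual{\xi}.
\]

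Then I would set $q := r + \alpha w$ for a small parameter $\alpha > 0$ to be fixed later. Substituting, the cross terms give
\[
\Bf(r,\xi; q, -\xi) = \enorm{r}^2 + \alpha \int_\Omega d^3 \nabla r \cdot \nabla w \dx + \alpha \edual{\xi}^2,
\]
and a Cauchy--Schwarz plus Young's inequality bound on the middle term,
\[
\alpha \int_\Omega d^3 \nabla r \cdot \nabla w \dx \geq -\tfrac{1}{2}\enorm{r}^2 - \tfrac{\alpha^2}{2}\edual{\xi}^2,
\]
yields $\Bf(r,\xi;q,-\xi) \geq \tfrac{1}{2}\enorm{r}^2 + (\alpha - \tfrac{\alpha^2}{2})\edual{\xi}^2$. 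Fixing $\alpha$ small enough (e.g.\ $\alpha = 1/2$) makes both coefficients positive; together with the elementary inequality $a^2 + b^2 \geq \tfrac{1}{2}(a+b)^2$ this delivers the first estimate with a quantitative $C_1$.

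The continuity bound on $q$ is then immediate from the triangle inequality:
\[
\enorm{q} \leq \enorm{r} + \alpha \enorm{w} = \enorm{r} + \alpha \edual{\xi} \leq C_2 \left(\enorm{r} + \edual{\xi}\right).
\]
I do not expect any real obstacle here; the only conceptual point is that the Riesz representation is taken with respect to the weighted inner product $\int d^3 \nabla\cdot\nabla\cdot$, which requires $d$ to have a positive lower bound so that the energy norm is equivalent to the $H^1_0$ norm. This is already implicit in the modeling assumption $d>0$ (bounded away from zero on the domain), so the argument closes without further hypothesis.
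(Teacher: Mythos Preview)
Your argument is correct and is essentially the same as the paper's: both construct $q$ by adding to $r$ the Riesz representative of $\xi$ with respect to the weighted inner product $\int_\Omega d^3\nabla\cdot\nabla\cdot$, then control the cross term by Cauchy--Schwarz and Young. The only cosmetic difference is that the paper takes the perturbation with unit weight ($q=r-r_\xi$) rather than introducing a tuning parameter $\alpha$, which is unnecessary here since the coefficients come out positive already for $\alpha=1$.
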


\begin{proof}
  Choose $q = r - r_\xi$ where $r_\xi \in Q$. Then by linearity and Cauchy--Schwarz inequality
  \begin{equation}
    \label{proof:contstab1}
    \Bf(r,\xi;r-r_\xi,-\xi) \geq \enorm{r}^2 - \enorm{r}\,\enorm{r_\xi} + \langle r_\xi, \xi \rangle.
  \end{equation}
  We now fix $r_\xi \in Q$ as the solution of the auxiliary problem
  \begin{equation}
    \int_\Omega d^3 \nabla r_\xi \cdot \nabla q \dx = \langle q, \xi \rangle, \quad \forall q \in Q.
  \end{equation}
  For such $r_\xi$, a standard reasoning shows that $\langle r_\xi, \xi \rangle = \enorm{r_\xi} = \edual{\xi}$, cf.~\citep{GSV17}. The upper bound for  $\enorm{r} + \edual{\xi}$ now follows from Young's inequality. The lower
bound is a  direct consequence of  the triangle inequality.
\end{proof}

We will approximate the (saddle-point)  Problem~\ref{prob:cont}  by  a
stabilized finite element method, instead of a mixed method, to avoid the Babu\v{s}ka-Brezzi inf-sup condition. Let $\Ch$~denote the discretization
of  $\Omega$ into triangular elements $K \in \Ch$ with $h_K$
referring to the local mesh parameter and $h = \max_{K \in \Ch}
h_K$. The discrete finite element spaces $Q_h \subset Q$ and
$\varLambda_h \subset L^2(\Omega)$ are defined as
\begin{align}
    Q_h &= \{ q_h \in Q : q_h|_K \in P_k(K)~\forall K \in \Ch \},\\
  \varLambda_h &= \{ \mu_h \in L^{2}(\Omega) : \mu_h|_K \in P_l(K)~\forall K \in \Ch \},
\end{align}
where $k\geq1$ and $l\geq0$ are the polynomial degrees.
We further denote by $\varLambda_h^+$ the following subset of  $\varLambda_h$
\begin{equation}
  \varLambda_h^+ = \{ \mu_h \in \varLambda_h : \mu_h \geq 0 \text{ a.e.~in $\Omega$} \}.
\end{equation}

Let $\alpha > 0$ be a stabilization parameter.
We introduce the following discrete bilinear and linear forms
\begin{align*}
  \Bf_h(r,\xi; q, \mu) &= \Bf(r,\xi; q, \mu) - \alpha \sum_{K \in \Ch} \frac{h_K^2}{d_K^3} \int_K(-\Eop{r} - \xi)(-\Eop{q} - \mu) \dx, \\
  \Lf_h(q, \mu) &= \Lf(q, \mu) - \alpha \sum_{K \in \Ch} \frac{h_K^2}{d_K^3} \int_K f (-\Eop{q} - \mu) \dx,
\end{align*}
where $d_K$ represents the mean value of $d$ inside the element $K$.
Note that, in contrast to the obstacle problem with a constant material
parameter, here a proper scaling by $d_K$ is mandatory as the film thickness function
$d^3$ may vary several orders of magnitude inside the domain in practical
lubrication problems.


\begin{problem}[Stabilized finite element method]
    Find $(p_h,\lambda_h)
    \in Q_h \times \varLambda_h^+$~satisfying
    \begin{equation}
      \label{eq:discweak} \Bf_h(p_h,\lambda_h; q_h, \mu_h-\lambda_h) \leq \Lf_h(q_h, \mu_h-\lambda_h)
    \end{equation}
    for every $(q_h,\mu_h) \in Q_h \times \varLambda_h^+$.
    \label{prob:stabfem}
\end{problem}

To simplify the
estimates, we will 
assume that $d \in C^1(\Omega)$ and that for a given mesh $\Ch$ there
exists global positive constants $\gamma_1$ and $\gamma_2$ so that
\begin{equation}
  \gamma_1 \min_{\boldsymbol{x} \in K} d(\boldsymbol{x}) \leq d(\boldsymbol{y}) \leq \gamma_2 \max_{\boldsymbol{x} \in K} d(\boldsymbol{x})
\end{equation}
for any element $K \in \Ch$ and at every point $\boldsymbol{y} \in K$. Generalization to a non-smooth $d$ would be straightforward following the reasoning presented in \cite{BV2000}, see also \cite{Verf2000}.

\begin{theorem}[Discrete stability]
  \label{thm:discstab}
  Suppose that $0<\alpha<C_I$ where $C_I$ is the constant of the
  inverse inequality
  \begin{equation}
    C_I \sum_{K \in \Ch} \frac{h_K^2}{d_K^3} \|\Eop{q_h}\|_{0,K}^2 \leq \enorm{q_h}^2 \quad \forall q_h \in Q_h. 
    \label{inverseineq}
  \end{equation}
Then for all $(r_h,\xi_h) \in Q_h \times \varLambda_h$ there exists $q_h \in Q_h$ such that for some $C_1,C_2>0$ it holds
  \begin{equation}
    \Bf_h(r_h,\xi_h; q_h, -\xi_h) \geq C_1 \left(\enorm{r_h} + \edual{\xi_h}\right)^2,
  \end{equation}
  and
  \begin{equation}
    \enorm{q_h} \leq C_2 \left( \enorm{r_h} + \edual{\xi_h} \right).
  \end{equation}
\end{theorem}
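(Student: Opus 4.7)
The plan is to mirror the continuous-stability proof of Theorem~\ref{contstab}, but with the discrete test function constructed via interpolation of the auxiliary Riesz representer. The new ingredient compared to \cite{GSV17} is the need to carry the variable coefficient $d^3$ through every Clément-type estimate, which is where the assumption relating $d(\boldsymbol{y})$ to element-wise constants via $\gamma_1,\gamma_2$ enters.

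As a first step I would test with $(r_h,-\xi_h)$ itself. Direct computation gives
\[
  \Bf_h(r_h,\xi_h;r_h,-\xi_h) = \enorm{r_h}^2 - \alpha\sum_{K\in\Ch}\frac{h_K^2}{d_K^3}\|\Eop{r_h}\|_{0,K}^2 + \alpha\sum_{K\in\Ch}\frac{h_K^2}{d_K^3}\|\xi_h\|_{0,K}^2,
\]
and the inverse inequality~\eqref{inverseineq} together with $\alpha<C_I$ absorbs the negative middle term into $\enorm{r_h}^2$. What remains is control on $\enorm{r_h}^2$ and on the mesh-weighted $L^2$ norm of $\xi_h$, not yet on $\edual{\xi_h}$.

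To upgrade the $\xi_h$-control to the dual norm, I would, as in the continuous case, define $r_{\xi_h}\in Q$ by $\int_\Omega d^3 \nabla r_{\xi_h}\cdot\nabla v\dx = \langle v,\xi_h\rangle$ for all $v\in Q$, so that $\enorm{r_{\xi_h}}=\edual{\xi_h}$, and then take a Scott--Zhang/Clément interpolant $\pi_h r_{\xi_h}\in Q_h$. Under the assumption on $d$, this interpolant satisfies the weighted stability and approximation estimates
\[
  \enorm{\pi_h r_{\xi_h}} \le C\,\edual{\xi_h}, \qquad \sum_{K\in\Ch}\frac{d_K^3}{h_K^2}\|r_{\xi_h}-\pi_h r_{\xi_h}\|_{0,K}^2 \le C\,\edual{\xi_h}^2.
\]
I would then set $q_h = r_h - \delta\,\pi_h r_{\xi_h}$ with $\delta>0$ to be fixed. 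Expanding $\Bf_h(r_h,\xi_h;q_h,-\xi_h)$ by linearity in the test slot, the new contribution is $-\delta\,\Bf_h(r_h,\xi_h;\pi_h r_{\xi_h},0)$, whose duality pairing term is rewritten as $\delta\edual{\xi_h}^2 - \delta\,\langle r_{\xi_h}-\pi_h r_{\xi_h},\xi_h\rangle$, producing the desired positive $\delta\edual{\xi_h}^2$.

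The main obstacle is then the routine but delicate bookkeeping of the remaining cross terms: $\delta\int_\Omega d^3\nabla r_h\cdot\nabla\pi_h r_{\xi_h}\dx$ is controlled by Cauchy--Schwarz, $\enorm{\pi_h r_{\xi_h}}\le C\edual{\xi_h}$ and Young's inequality; the interpolation defect $\delta\langle r_{\xi_h}-\pi_h r_{\xi_h},\xi_h\rangle$ is paired with the mesh-weighted $L^2$ mass of $\xi_h$ already produced in step one via the weighted approximation bound; the stabilization cross term with $(\Eop{r_h}+\xi_h)\Eop{\pi_h r_{\xi_h}}$ is handled by the inverse inequality applied to $\pi_h r_{\xi_h}$ and its stability. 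Each cross term is absorbed by choosing $\delta$ sufficiently small, leaving a positive combination of $\enorm{r_h}^2$, $\edual{\xi_h}^2$ and $\sum_K(h_K^2/d_K^3)\|\xi_h\|_{0,K}^2$ on the right-hand side, which yields the lower bound. The upper bound $\enorm{q_h}\le C_2(\enorm{r_h}+\edual{\xi_h})$ then follows immediately from the triangle inequality and the stability of $\pi_h$.
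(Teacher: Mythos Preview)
Your proposal is correct and follows essentially the same route as the paper: first test with $(r_h,-\xi_h)$ and use the inverse inequality~\eqref{inverseineq} to obtain control of $\enorm{r_h}^2$ and the mesh-weighted $L^2$ norm of $\xi_h$, then add a small multiple of a discrete Riesz-type direction to extract $\edual{\xi_h}^2$. The paper simply packages your second step as the existence of $\widetilde{q_h}\in Q_h$ with
\[
  \Bf_h(r_h,\xi_h;\widetilde{q_h},0)\ge C_3\,\edual{\xi_h}^2-C_4\Big(\enorm{r_h}^2+\sum_{K\in\Ch}\tfrac{h_K^2}{d_K^3}\|\xi_h\|_{0,K}^2\Big),
\]
citing \cite[Theorem~4.1]{GSV17} for the construction, whereas you spell it out explicitly via the Cl\'ement interpolant $\pi_h r_{\xi_h}$ of the continuous Riesz representer and the $d$-weighted stability and approximation bounds; your $\widetilde{q_h}$ is precisely $-\pi_h r_{\xi_h}$ up to the sign convention.
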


\begin{proof}
    Using the inverse inequality~\eqref{inverseineq}, we get
  \begin{align*}
    \Bf_h(r_h,\xi_h; r_h, -\xi_h) &= \enorm{r_h}^2 - \alpha \sum_{K \in \Ch} \frac{h_K^2}{d_K^3} \| \Eop{r_h} \|_{0,K}^2 + \alpha \sum_{K \in \Ch} \frac{h_K^2}{d_K^3} \|\xi_h\|_{0,K}^2 \\
&\geq \left(1-\frac{\alpha}{C_I}\right) \enorm{r_h}^2 + \alpha \sum_{K \in \Ch} \frac{h_K^2}{d_K^3} \|\xi_h\|_{0,K}^2.
  \end{align*}
  Following the general steps presented in \citep[Theorem 4.1]{GSV17}, we can
  construct $\widetilde{q_h} \in Q_h$ which satisfies
  \begin{align*}
    \Bf_h(r_h,\xi_h; \widetilde{q_h}, 0) \geq C_3 \edual{\xi_h}^2 - C_4\left(\enorm{r_h}^2 + \sum_{K \in \Ch} \frac{h_K^2}{d_K^3} \|\xi_h\|_{0,K}^2 \right)
  \end{align*}
  with some positive constants $C_3$ and $C_4$. Hence by bilinearity
  \begin{align*}
    \Bf_h(r_h,\xi_h; r_h + \delta \widetilde{q_h}, -\xi_h) &= \Bf_h(r_h,\xi_h; r_h, -\xi_h) + \delta \Bf_h(r_h,\xi_h; \widetilde{q_h}, 0) \\
                                                           &\geq  \left( 1 - \frac{\alpha}{C_I} - \delta C_4 \right) \enorm{r_h}^2 + \delta C_3 \edual{\xi_h}^2 \\
    &\phantom{=}\quad + \left(\alpha - \delta C_4\right) \sum_{K \in \Ch} \frac{h_K^2}{d_K^3} \|\xi_h\|_{0,K}^2,
  \end{align*}
  where for any given $\alpha \in (0,C_I)$ the positive parameter
  $\delta$ can be chosen in such a way that all terms on the right hand side
  remain positive.
\end{proof}

\begin{theorem}
    Let $f_h \in Q_h$ be the $L^2$-projection of $f$ and suppose that
    $0<\alpha<C_I$. There exists $C>0$ such that the following a priori
    estimate holds
    \begin{align}
        &\enorm{p-p_h} + \edual{\lambda-\lambda_h} \\
        &\leq C \left( \inf_{q_h \in Q_h} \enorm{p-q_h} + \inf_{\mu_h \in \varLambda_h^+}\left(\edual{\lambda-\mu_h}+\sqrt{\int_\Omega (p-p_c)\,\mu_h \dx} \right)\right.\nonumber\\
                                                 &\left. \phantom{=}\quad+  \sqrt{\sum_{K \in \Ch} \tfrac{h_K^2}{d_K^3}\|f-f_h\|_{0,K}^2 } \right).\nonumber
    \end{align}
\end{theorem}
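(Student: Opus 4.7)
The plan is to apply the discrete stability of Theorem~\ref{thm:discstab} to the discrete error $(q_h-p_h,\mu_h-\lambda_h)$ for arbitrary $(q_h,\mu_h)\in Q_h\times\varLambda_h^+$, then to decompose the resulting bilinear form through the continuous solution and invoke a consistency identity on the one hand and a continuity estimate on the other. The triangle inequality reduces the task to bounding $\enorm{q_h-p_h}+\edual{\mu_h-\lambda_h}$, and Theorem~\ref{thm:discstab} produces $v_h\in Q_h$ with $\enorm{v_h}\leq C_2(\enorm{q_h-p_h}+\edual{\mu_h-\lambda_h})$ and $C_1(\enorm{q_h-p_h}+\edual{\mu_h-\lambda_h})^2\leq \Bf_h(q_h-p_h,\mu_h-\lambda_h;v_h,\lambda_h-\mu_h)$. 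Writing this as $\Bf_h(q_h-p,\mu_h-\lambda;v_h,\lambda_h-\mu_h)+\Bf_h(p-p_h,\lambda-\lambda_h;v_h,\lambda_h-\mu_h)$, the second term is treated by consistency: the strong form $-\Eop{p}-\lambda=f$ derived from Problem~\ref{prob:cont} yields $\Bf_h(p,\lambda;v_h,\nu)=\Lf_h(v_h,\nu)-\langle p-p_c,\nu\rangle$, and combining with the discrete inequality $\Bf_h(p_h,\lambda_h;v_h,\mu_h-\lambda_h)\leq \Lf_h(v_h,\mu_h-\lambda_h)$ one gets
\begin{equation*}
  \Bf_h(p-p_h,\lambda-\lambda_h;v_h,\lambda_h-\mu_h)\leq \langle p-p_c,\mu_h\rangle-\langle p-p_c,\lambda_h\rangle\leq \int_\Omega(p-p_c)\,\mu_h\dx,
\end{equation*}
the last step using $p\geq p_c$ and $\lambda_h\geq 0$.

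The step I expect to be the hardest is the continuity estimate
\begin{equation*}
  \bigl|\Bf_h(q_h-p,\mu_h-\lambda;v_h,\lambda_h-\mu_h)\bigr|\leq C\bigl(\enorm{q_h-p}+\edual{\mu_h-\lambda}+\mathcal{O}_h\bigr)\bigl(\enorm{v_h}+\edual{\mu_h-\lambda_h}\bigr),
\end{equation*}
where $\mathcal{O}_h=\bigl(\sum_K h_K^2 d_K^{-3}\|f-f_h\|_{0,K}^2\bigr)^{1/2}$. The three non-stabilization pieces of $\Bf_h$, namely the energy bilinear form and the two duality pairings, are bounded directly by Cauchy--Schwarz and the definition of $\edual{\cdot}$. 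For the stabilization term, I would use $-\Eop{p}-\lambda=f$ to rewrite the residual $-\Eop{(q_h-p)}-(\mu_h-\lambda)$ as $-\Eop{q_h}-\mu_h-f$, and then split $f=f_h+(f-f_h)$ to isolate the data oscillation $\mathcal{O}_h$; the remaining polynomial factors on the test side are controlled with the inverse inequality~\eqref{inverseineq} and a discrete equivalence of the form $\sum_K h_K^2 d_K^{-3}\|\xi_h\|_{0,K}^2\leq C\edual{\xi_h}^2$ on $\varLambda_h$. The presence of the variable coefficient $d^3$, encoded through the weights $d_K^{-3}$, is what makes this step the main technical novelty compared with the constant-coefficient analysis of~\cite{GSV17}.

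Finally, inserting the consistency and continuity bounds into the stability estimate, using $\enorm{v_h}+\edual{\mu_h-\lambda_h}\leq(C_2+1)(\enorm{q_h-p_h}+\edual{\mu_h-\lambda_h})$, and applying Young's inequality to absorb the quadratic factor on the left, one obtains
\begin{equation*}
  \enorm{q_h-p_h}+\edual{\mu_h-\lambda_h}\leq C\Bigl(\enorm{q_h-p}+\edual{\mu_h-\lambda}+\sqrt{\textstyle\int_\Omega(p-p_c)\mu_h\dx}+\mathcal{O}_h\Bigr).
\end{equation*}
The claimed a priori bound then follows from the initial triangle inequality and taking the infimum over $q_h\in Q_h$ and $\mu_h\in\varLambda_h^+$.
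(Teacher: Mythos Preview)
Your proposal is correct and follows precisely the route the paper indicates: the paper gives no proof of its own here but states that the argument rests on the discrete stability of Theorem~\ref{thm:discstab} and refers to~\cite{GSV17} for the details. Your steps---triangle inequality, discrete stability applied to $(q_h-p_h,\mu_h-\lambda_h)$, the consistency identity from the strong form $-\Eop{p}-\lambda=f$ combined with the discrete variational inequality to produce the contact term $\int_\Omega(p-p_c)\mu_h\dx$, and a continuity bound in which the $d_K^{-3}$-weighted stabilization is controlled via the inverse inequality~\eqref{inverseineq} together with the bubble-function estimate $\sum_K h_K^2 d_K^{-3}\|\xi_h\|_{0,K}^2\leq C\,\edual{\xi_h}^2$---are exactly the ingredients of that reference, adapted by the coefficient weighting.
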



This is essentially a best approximation result analogous to the well
known C\'{e}a's lemma which holds for coercive variational
problems. The proof is based on the stability result of
Theorem~\ref{thm:discstab} and can be found in~\cite{GSV17}. The first two
terms on the right hand side can be bounded from above using interpolation
estimates. The third term is zero if the finite element mesh follows exactly
the boundary of the cavitation region. In general this cannot be guaranteed a
priori and adaptive methods are required to properly resolve the cavitation
boundary.

\section{Adaptive refinement}

\label{sec:adaptive}

Let $d_E$ stand for $d$'s mean value  along the  edge $E$. The following a posteriori error bounds
demonstrate the suitability of our stabilized finite element method to adaptive solution
strategies.

\begin{theorem}
    \label{thm:aposteriori}
    The following a posteriori estimate holds
    \begin{align}
        &\enorm{p-p_h}+\edual{\lambda-\lambda_h} \\
        &\leq C\left(\sqrt{\sum_{K \in \Ch} \tfrac{h_K^2}{d_K^3} \|\Eop{p_h}+\lambda_h+f\|_{0,K}^2 } + \sqrt{\sum_{E \in \Eh} \tfrac{h_E}{d_E^3} \| d^3 \llbracket \nabla p_h \cdot \boldsymbol{n} \rrbracket\|_{0,E}^2} \right. \nonumber \\
        &\phantom{=}\quad \left. +\enorm{(p_c-p_h)_+} +\sqrt{\int_\Omega (p_h-g)_+\lambda_h \dx}\right) \, , 
        \nonumber 
    \end{align}
    where the constant $C>0$ is independent of $h$ and $d$. 
\end{theorem}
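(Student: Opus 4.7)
The plan is to apply Theorem~\ref{contstab} to the error $(p-p_h,\lambda-\lambda_h) \in Q\times H^{-1}(\Omega)$. This yields $q\in Q$ with $\enorm{q}\leq C_2(\enorm{p-p_h}+\edual{\lambda-\lambda_h})$ and
\begin{equation*}
C_1\bigl(\enorm{p-p_h}+\edual{\lambda-\lambda_h}\bigr)^2 \leq \Bf(p-p_h,\lambda-\lambda_h;\,q,-(\lambda-\lambda_h)).
\end{equation*}
Expanding the bilinear form on the right and substituting the continuous identity $\int_\Omega d^3\nabla p\cdot\nabla q\dx = \int_\Omega fq\dx + \langle q,\lambda\rangle$ (obtained from \eqref{eq:lagweak} tested with $\mu=\lambda$ and $\pm q$), the unknown multiplier $\lambda$ cancels from the linear part, leaving
\begin{equation*}
\int_\Omega(f+\lambda_h)q\dx - \int_\Omega d^3\nabla p_h\cdot\nabla q\dx + \langle p-p_h,\lambda-\lambda_h\rangle.
\end{equation*}

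For the first two terms I follow the standard residual route: insert a Cl\'ement/Scott--Zhang interpolant $q_h\in Q_h$ of $q$ and split $q=(q-q_h)+q_h$. The $q_h$-piece is handled by the discrete Galerkin equality obtained from testing \eqref{eq:discweak} with $\mu_h=\lambda_h$ and $\pm q_h\in Q_h$; it contributes only the stabilization correction $-\alpha\sum_{K\in\Ch}(h_K^2/d_K^3)\int_K(\Eop{p_h}+\lambda_h+f)\Eop{q_h}\dx$, which Cauchy--Schwarz and the inverse inequality \eqref{inverseineq} bound by the volume estimator times $\enorm{q}$. The $(q-q_h)$-piece, after elementwise integration by parts, yields the interior residual $\Eop{p_h}+\lambda_h+f$ on each $K$ and the jump $d^3\llbracket\nabla p_h\cdot\boldsymbol{n}\rrbracket$ on each interior edge; combined with the weighted interpolation estimates
\begin{equation*}
\|q-q_h\|_{0,K}\lesssim \tfrac{h_K}{d_K^{3/2}}\enorm{q}_{\omega_K}, \qquad \|q-q_h\|_{0,E}\lesssim \tfrac{h_E^{1/2}}{d_E^{3/2}}\enorm{q}_{\omega_E},
\end{equation*}
justified under the uniformity assumption on $d$ (cf.~\cite{BV2000}), a Cauchy--Schwarz bound delivers the first two estimator contributions of Theorem~\ref{thm:aposteriori}, each multiplied by $\enorm{q}$.

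The main obstacle is the duality term $\langle p-p_h,\lambda-\lambda_h\rangle$, because this is the only place where the sign and complementarity conditions \eqref{eq:lagstrong2}--\eqref{eq:lagstrong3} enter. I split
\begin{equation*}
\langle p-p_h,\lambda-\lambda_h\rangle = \langle p-p_c,\lambda\rangle - \langle p-p_c,\lambda_h\rangle + \langle p_c-p_h,\lambda-\lambda_h\rangle,
\end{equation*}
drop the first term by complementarity, the second by $p-p_c\geq 0$ together with $\lambda_h\geq 0$, and decompose $p_c-p_h=(p_c-p_h)_+-(p_h-p_c)_+$ in the third. Using $\lambda\geq 0$ on the resulting $(p_h-p_c)_+$ summand gives
\begin{equation*}
\langle p-p_h,\lambda-\lambda_h\rangle \leq \enorm{(p_c-p_h)_+}\,\edual{\lambda-\lambda_h} + \int_\Omega(p_h-p_c)_+\lambda_h\dx,
\end{equation*}
i.e.\ precisely the two remaining estimator terms (identifying $g=p_c$). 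Collecting the bounds, invoking $\enorm{q}\leq C_2(\enorm{p-p_h}+\edual{\lambda-\lambda_h})$, and applying Young's inequality to absorb one factor of $\enorm{p-p_h}+\edual{\lambda-\lambda_h}$ on the right into the left-hand side yields the claim, with $C$ depending on $\alpha$, $C_I$, the Cl\'ement constants, and $\gamma_1,\gamma_2$, but not on $h$ or on $d$ pointwise.
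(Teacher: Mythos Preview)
Your proof is correct and follows essentially the same route as the paper: invoke the continuous stability of Theorem~\ref{contstab}, split off a Cl\'ement interpolant, use the discrete Galerkin equality (from testing \eqref{eq:discweak} with $\mu_h=\lambda_h$) to produce the stabilization residual controlled via \eqref{inverseineq}, integrate by parts elementwise on the remainder, and bound the duality term using complementarity and the sign constraints. The only cosmetic difference is that the paper bundles the steps $\langle p-p_c,\lambda\rangle=0$ and $\langle p-p_c,\lambda_h\rangle\geq 0$ into the single use of the continuous variational inequality \eqref{eq:lagweak} with $\mu=\lambda_h$, whereas you invoke them explicitly after writing the equation for $p$ as an equality; the resulting term $\langle p_c-p_h,\lambda-\lambda_h\rangle$ and its treatment are identical.
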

\begin{proof} Since the proof goes as in~\cite{GSV17}, we will only outline the main steps and refer to~\cite{GSV17} for further details.

In view of  Theorem \ref{contstab}, there exists $q\in Q$ such that
\[
\enorm{q} \leq C_2 \left( \enorm{p-p_h} + \edual{\lambda-\lambda_h} \right)
\]
and 
    \begin{align*}
        & (\enorm{p-p_h}+\edual{\lambda-\lambda_h})^2 \\
    & \leq C_1\,   \Bf(p-p_h,\lambda-\lambda_h; q, \lambda_h-\lambda) \\
    & \leq
    C_1\, \big(  \Bf(p,\lambda; q, \lambda_h-\lambda) -\Bf(p_h,\lambda_h; q, \lambda_h-\lambda) -\Bf_h(p_h,\lambda_h;-\widetilde{q},0)+\Lf_h(-\widetilde{q},0)\big) \\
    &\leq C_1\Big( \Lf(q-\widetilde{q},\lambda_h-\lambda) -\Bf(p_h,\lambda_h; q-\widetilde{q}, \lambda_h-\lambda) \\
     &\phantom{=}\qquad  -\alpha 
    \sum_{K \in \Ch} \tfrac{h_K^2}{d_K^3}(\Eop{p_h}+\lambda_h+f,\Eop{\widetilde{q}})_{0,K}\Big) \\
    & = C_1\Big( \sum_{K \in \Ch} (\Eop{p_h}+\lambda_h+f,q-\widetilde{q})_{0,K} -\sum_{E \in \Eh} (d^3 \llbracket \nabla p_h \cdot \boldsymbol{n} \rrbracket,q-\widetilde{q})_{0,E}\\
    &\phantom{=}\qquad  +\langle p_h-p_c,\lambda_h-\lambda\rangle  -\alpha 
    \sum_{K \in \Ch} \tfrac{h_K^2}{d_K^3}(\Eop{p_h}+\lambda_h+f,\Eop{\widetilde{q}})_{0,K}\Big) \, ,
    \end{align*}
where $\widetilde{q}\in Q_h$ is the Cl\'ement interpolant of $q$. Note that
\[
0\leq -\Bf_h(p_h,\lambda_h;-\widetilde{q},0)+\Lf_h(-\widetilde{q},0)
\]
and that for some positive constants $C^\prime$ and $C^{\prime\prime}$ it holds
\[
\Big(  \sum_{K \in \Ch}  h_K^{-2} d_K^3 \| q-\widetilde{q} \|_{0,K}^2\Big)^{1/2} + \Big(  \sum_{E \in \Eh} h_E^{-1}d_E^3 \| q-\widetilde{q} \|_{0,E}^2\Big)^{1/2} \leq C^\prime\, \enorm{q} \, ,
\]
and
\[
    \enorm{\widetilde{q}}\leq C^{\prime\prime}\, \enorm{q}\, . 
\]

Given  that
    \begin{align*}
        &\sum_{K \in \Ch} (\Eop{p_h}+\lambda_h+f,q-\widetilde{q})_{0,K} \\
     &\qquad \leq \sum_{K \in \Ch} \tfrac{h_K}{d_K^{3/2}} \|\Eop{p_h}+\lambda_h+f\|_{0,K}\, \tfrac{d_K^{3/2}}{h_K} \| q-\widetilde{q} \|_{0,K}, \\ 
     &\sum_{E \in \Eh} (d^3 \llbracket \nabla p_h \cdot \boldsymbol{n} \rrbracket,q-\widetilde{q})_{0,E}\\
     &\qquad \leq \sum_{E \in \Eh} \tfrac{h_E^{1/2}}{d_E^{3/2}} \| d^3 \llbracket \nabla p_h \cdot \boldsymbol{n} \rrbracket\|_{0,E} \tfrac{d_E^{3/2}}{h_E^{1/2}} \| q-\widetilde{q} \|_{0,E}, \\ 
     & \sum_{K \in \Ch} \tfrac{h_K^2}{d_K^3}(\Eop{p_h}+\lambda_h+f,\Eop{\widetilde{q}})_{0,K} \\
     & \qquad \leq \sum_{K \in \Ch} \tfrac{h_K}{d_K^{3/2}} \|\Eop{p_h}+\lambda_h+f\|_{0,K}\, \tfrac{h_K}{d_K^{3/2}} \|\Eop{\widetilde{q}}\|_{0,K},
       \end{align*}
   and (cf.~\cite{GSV17})
   \[
   \langle p_h-p_c,\lambda_h-\lambda\rangle \leq \enorm{(p_c-p_h)_+}\edual{\lambda-\lambda_h} + \langle (p_h-p_c)_+,\lambda_h \rangle   \]
   one readily derives the upper bound  for $\enorm{p-p_h}+\edual{\lambda-\lambda_h}$ by combining the above estimates and using the inverse inequality \eqref{inverseineq}.
    
\end{proof}

In the next theorem we show the existence of a lower bound.
\begin{theorem}
  There exists a constant $C_1>0$ such that 
  \begin{align*}
    & \Big( \sum_{K \in \Ch} \tfrac{h_K^2}{d_K^3} \| \Eop{p_h} + \lambda_h + f \|_{0,K}^2\Big)^{1/2}+ 
    \Big(\sum_{E \in \Eh} \tfrac{h_E}{d_E^3} \| d^3 \llbracket \nabla p_h \cdot \boldsymbol{n} \rrbracket\|_{0,E}^2\Big)^{1/2} \\
 &\phantom{=}\qquad \leq  C_1 \left( \enorm{p-p_h} + \edual{\lambda - \lambda_h} +\sqrt{\sum_{K \in \Ch} \tfrac{h_K^2}{d_K^3}\|f-f_h\|_{0,K}^2 } \right)\, .
     \end{align*}
\end{theorem}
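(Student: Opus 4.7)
The plan is to establish the efficiency estimate via the classical Verf\"urth bubble function technique, adapted to the saddle-point structure of the variational inequality and to the $d$-weighted norms---the same recipe used for the unweighted obstacle problem in~\cite{GSV17}. Two residuals must be bounded separately: the interior element residual, which we conveniently write as $R_h^K := \Eop{p_h}+\lambda_h+f_h$ with $f_h$ the $L^2$-projection of $f$ (introduced so that the residual is piecewise polynomial and norm equivalence on finite-dimensional spaces applies), and the edge jump $J_E := d^3 \llbracket \nabla p_h \cdot \boldsymbol{n}\rrbracket$. Passing from $R_h^K$ back to $\Eop{p_h}+\lambda_h+f$ by triangle inequality produces the data oscillation $\|f-f_h\|$ appearing on the right-hand side.

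First I would handle the element residual. For each $K$, take the scaled bubble test function $v_K := (h_K^2/d_K^3)\, b_K R_h^K \in H_0^1(K)$. Norm equivalence on $P_k(K)$ gives $(h_K^2/d_K^3)\,\|R_h^K\|_{0,K}^2 \lesssim \int_K R_h^K v_K \dx$. Using the continuous identity $\Eop{p}+\lambda+f=0$ and integrating by parts (exploiting $v_K|_{\partial K}=0$) yields the key identity
\begin{equation*}
\int_K R_h^K v_K \dx = \int_K d^3 \nabla(p-p_h)\cdot\nabla v_K \dx - \langle v_K, \lambda-\lambda_h\rangle + \int_K (f_h-f)\,v_K \dx.
\end{equation*}
Summing over $K$, setting $v := \sum_K v_K \in Q$ (the summands have pairwise disjoint supports), and applying Cauchy--Schwarz in the energy and dual norms delivers $S \lesssim \bigl(\enorm{p-p_h} + \edual{\lambda-\lambda_h}\bigr)\enorm{v} + \mathrm{osc}\cdot\sqrt{S}$, where $S := \sum_K (h_K^2/d_K^3)\,\|R_h^K\|_{0,K}^2$ and bubble/inverse estimates give $\enorm{v}^2 \lesssim S$. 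Division by $\sqrt{S}$ closes the element bound.

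Next I would treat the jump. For each edge $E \in \Eh$ with patch $\omega_E = K^+ \cup K^-$, choose $v_E := (h_E/d_E^3)\, b_E \hat{J}_E \in H_0^1(\omega_E)$, where $b_E$ is the standard edge bubble and $\hat{J}_E$ is a polynomial extension of $J_E$ from $E$ to $\omega_E$. Norm equivalence on edge polynomials gives $(h_E/d_E^3)\,\|J_E\|_{0,E}^2 \lesssim \int_E J_E v_E \,\mathrm{d}s$. Elementwise integration by parts on $\omega_E$, combined with the weak form of the continuous equation, produces
\begin{equation*}
\int_E J_E v_E \,\mathrm{d}s = -\int_{\omega_E} d^3 \nabla(p-p_h)\cdot \nabla v_E \dx + \langle v_E, \lambda-\lambda_h\rangle + \sum_{K \subset \omega_E} \int_K (\Eop{p_h}+\lambda_h+f)\, v_E \dx.
\end{equation*}
Summing over $E$, using bounded overlap of the patches, the scaling $\enorm{v_E}^2 \lesssim (h_E/d_E^3)\,\|J_E\|_{0,E}^2$, and reusing the element-residual bound from the previous step to absorb the last sum, I conclude $\sqrt{T} \lesssim \enorm{p-p_h} + \edual{\lambda-\lambda_h} + \mathrm{osc}$, where $T := \sum_E (h_E/d_E^3)\,\|J_E\|_{0,E}^2$.

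The main obstacle I foresee is keeping the duality term $\edual{\lambda-\lambda_h}$ summable: it is a \emph{global} norm, while the bubble test functions are locally supported. The remedy is to postpone Cauchy--Schwarz on that pairing until \emph{after} summing the local identities, testing against the aggregated $v:=\sum_K v_K$ (resp.\ $\sum_E v_E$), which still belongs to $Q$; the price $\enorm{v}$ is then controlled by $\sqrt{S}$ (resp.\ $\sqrt{T}$) precisely because of the $h_K$- and $d_K$-weighted scaling chosen for the test functions. This structural step, together with verifying the correct $d$-dependent bubble and inverse estimates in the weighted setting, is the non-routine ingredient of the argument.
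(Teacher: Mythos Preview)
Your proposal is correct and follows essentially the same Verf\"urth bubble-function argument as the paper, with the same $d$-weighted scaling of the test functions and the same treatment of the data oscillation via $f_h$. The only cosmetic difference is that the paper introduces a \emph{local} dual norm $\edualK{\cdot}$ and derives a local lower bound before summing, whereas you aggregate the test functions first and bound $\langle v,\lambda-\lambda_h\rangle$ directly by the global dual norm; both routes are equivalent for the stated global estimate.
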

\begin{proof} Similarly to~\cite{GSV17}, we first derive local lower bounds. We start  by defining, for any $q_h\in Q_h$ and $\mu_h\in \Lambda_h$ and in each $K$, a function $\gamma_K$ by
\[
\gamma_K = \tfrac{h_K^2}{d_K^{3}} \, b_K \, (\Eop{p_h}+\lambda_h+f_h) 
\]
where $b_K\in P_3(K)\cap H^1_0(K)$ is the usual bubble function, and let
$\gamma_K= 0 $ in $\Omega\setminus K$.  Norm equivalence, equality
\eqref{eq:lagstrong1} and integration by parts, imply that
\begin{align*}
& \tfrac{h_K^2}{d_K^3} \|\Eop{q_h}+\mu_h+f_h\|_{0,K}^2 \\ &
\quad \leq C\,\tfrac{h_K^2}{d_K^{3}}\|\sqrt{b_K}( \Eop{q_h}+\mu_h+f_h)\|_{0,K}^2 = C\, (\Eop{q_h}+\mu_h+f_h, \gamma_K)_K \\
& \quad \leq C\, \big( (d^{3} \nabla (p-q_h), \nabla \gamma_K)_K + \langle\gamma_K, \mu_h-\lambda\rangle + (f_h-f,\gamma_K)_K\big)\\
& \quad \leq C\, \big(\enorm{p-q_h}_K \enorm{\gamma_K}_K + \edualK{\lambda-\mu_h} \enorm{\gamma_K}_K+  \|f-f_h\|_{0,K} \|\gamma_K\|_{0,K} \big) ,
\end{align*}
where we have defined, for any $\mu\in \Lambda$,
\[
\edualK{\mu} = \sup_{q\in H^1_0(K)}  \frac{\langle q,\mu\rangle}{\enorm{q}_K}
\, .
\] 
Here $q\in H^1_0(K)$ has been  extended by zero into $\Omega\setminus K$.
Using inverse estimates and norm equivalence, one easily shows that
\[
\enorm{\gamma_K}_K^2 \leq C\, d_K^3h_K^{-2} \ \|\gamma_K\|_{0,K}^2 \leq C \tfrac{h_K^2}{d_K^3} \|\Eop{q_h}+\mu_h+f_h\|_{0,K}^2\,.
\]
Therefore,
\[
 \|f-f_h\|_{0,K} \|\gamma_K\|_{0,K}  \leq C\, \Big( \tfrac{h_K}{d_K^{3/2}}  \|f-f_h\|_{0,K} \Big)\, \Big( \tfrac{h_K}{d_K^{3/2}} \|\Eop{q_h}+\mu_h+f_h\|_{0,K} \Big).
 \]
 Summing the above estimates over $K\in \mathcal{C}_h$ and choosing $q_h=p_h$ and $\mu_h=\lambda_h$ leads to the global lower bound in terms of the element residuals. We omit the proof of the bound in terms of the edge residuals since it is very similar, albeit a bit more technical, and has been proved in detail for a constant coefficient elliptic variational inequality in~\cite{GSV17}.

 \end{proof}

Aiming at generating sequences of adaptively refined meshes, we define, in view of Theorem~\ref{thm:aposteriori}, an
elementwise error estimator  
\begin{align}
    \mathcal{E}_K^2 &= \frac{h_K^2}{d_K^3} \|\Eop{p_h}+\lambda_h+f\|_{0,K}^2 + \frac12 \frac{h_E}{d_E^3} \| d^3 \llbracket \nabla p_h \cdot \boldsymbol{n} \rrbracket \|_{0,\partial K}^2 \\
                    &\phantom{=}+ \enorm{(p_c-p_h)_+}_{K}^2 + \int_K (p_h-p_c)_+ \lambda_h \dx. \label{eq:estimator}
\end{align}
Next, we choose an initial mesh $\Ch^0$,  a value for the parameter $\beta \in (0,1)$ and a maximum
number of iterations $k_\text{max}$, let $k=1$ and perform the following steps:
\begin{enumerate}[leftmargin=1.5cm]
    \item[Step 1.] Solve the stabilized problem using the mesh $\Ch^{k-1}$.
    \item[Step 2.] Evaluate the error estimator $\mathcal{E}_K$ for each $K \in \Ch^{k-1}$.
    \item[Step 3.] Refine the elements that satisfy $\mathcal{E}_K > \beta \max_{K^\prime \in \Ch^{k-1}} \mathcal{E}_{K^\prime}$ and build $\Ch^k$.
    \item[Step 4.] If $k>k_\text{max}$ stop, otherwise set $k=k+1$ and go to Step 1.
\end{enumerate}
An element is refined in Step 3 by adding a vertex at the
midpoint of each of its three  edges. This larger set of vertices is fed to a mesh
generator~\citep{shewchuk1996triangle} with an option to include additional points
if the shape regularity needs improving.

\section{Implementation}
\label{implementation}

The stabilized method of Problem~\ref{prob:stabfem} is straightforward to implement using a semismooth Newton's method. This leads to an efficient
solution strategy where the reassembly of finite element matrices
at each Newton step can be circumvented~\citep{GSV17, gustafsson2017obstacle}

Alternatively, the Lagrange multiplier, which is discontinuous across the inter-element boundaries, can be eliminated locally  to yield a formulation
 similar to the traditional penalty method.  In fact, letting
$\mathcal{H} \in L^2(\Omega)$, $\mathcal{D} \in L^2(\Omega)$ and
$\Eoph{p_h} \in L^2(\Omega)$ denote the functions satisfying
\begin{equation}
\mathcal{H}|_K = h_K, \quad
\mathcal{D}|_K = d_K, \quad
\Eoph{p_h}|_K = \Eop{p_h|_K}, \quad \forall K \in \Ch ,
\end{equation}
the discrete cavitation region corresponding to $p_h$ becomes
\begin{equation}
  \Omega_h^c(p_h) = \{(x,y) \in \Omega : \tfrac{\mathcal{D}^3}{\alpha \mathcal{H}^2}(p_c-p_h) - f - \Eoph{p_h} > 0 \}.
\end{equation}
Defining  the discrete forms
\begin{align*}
    a_h(p_h,q_h; r_h) &= \int_{\Omega_h^c(r_h)}\!\! \big( \, p_h\,\Eoph{q_h} + \Eoph{p_h} q_h + \tfrac{\mathcal{D}^3}{\alpha \mathcal{H}^2} p_h q_h \big)\dx \\
    &\quad - \alpha \int_{\Omega \setminus\Omega_h^c(r_h)} \mathcal{H}^2 \Eoph{p_h} \Eoph{q_h} \dx, \\
    L_h(q_h; r_h) &= \int_{\Omega_h^c(r_h)}\!\! \big(\,  p_c\,\Eoph{q_h}  + \tfrac{\mathcal{D}^3}{\alpha \mathcal{H}^2} p_c q_h - f q_h \big)\dx \\
    &\quad + \alpha \int_{\Omega \setminus\Omega_h^c(r_h)}\mathcal{H}^2 f \Eoph{q_h} \dx,
\end{align*}
we then arrive at the following formulation of  Problem~\ref{prob:stabfem}.
\begin{problem}[Nitsche's method]
  Find $p_h \in Q_h$ and the region $\Omega_h^c = \Omega_h^c(p_h)$ such that
  \begin{equation}
    \int_{\Omega} d^3 \nabla p_h \cdot \nabla q_h \dx + a_h(p_h,q_h; p_h) = \int_{\Omega} f q_h \dx + L_h(q_h; p_h),
  \end{equation}
  for every $q_h \in Q_h$.
  \label{prob:nitsche}
\end{problem}

\begin{remark}
    \label{rem:iteration}
    The iterative method for solving Problem~\ref{prob:nitsche}  reads as:  given $p_h^{k-1} \in Q_h$, find
     $p_h^k \in Q_h, k=1,2,\ldots$ such that
    \begin{equation}
        \int_{\Omega} d^3 \nabla p_h^k \cdot \nabla q_h \dx + a_h(p_h^k,q_h; p_h^{k-1}) = \int_{\Omega} f q_h \dx + L_h(q_h; p_h^{k-1}),
    \end{equation}
    for every $q_h \in Q_h$. 
    One starts with an initial guess $p_h^0$ which solves the
    unconstrained Reynolds equation and the iteration is terminated when
    $\enorm{p_h^k-p_h^{k-1}}$ is small enough. The a posteriori error estimator
    defined in \eqref{eq:estimator} applies with  $\lambda_h =
    (\frac{\mathcal{D}^3}{\alpha \mathcal{H}^2}(p_c - p_h) - f - \Eoph{p_h})_+$.
\end{remark}

\begin{remark}
A similar method was recently proposed by Burman et al. in \cite{burman2017} where the authors recast the classical obstacle problem as an equality for the primal variable. Their method is based on  an augmented Lagrangian approach through an elimination of the Lagrange multiplier in the spirit of Chouly and Hild \cite{choulyhild2013}. The extra terms can interpreted as nonlinear consistent penalty terms. 
\end{remark}

\section{Numerical results}

The work of Raimondi--Boyd~\citep{raimondi1958solution} contains a
well-documented and classical application of the Reynolds cavitation model. The
authors solve a finite-width journal bearing problem for various configurations
using the finite difference method, essentially extending the solution
technique of Christopherson~\citep{christopherson1941new} to two-dimensional
problems. We will consider the same boundary-value problem to highlight the
differences between the classical and the proposed numerical procedures. We
 solve the problem also by the finite element penalty method for which a
posteriori error estimators are available and which has often been considered
for approximating cavitation problems. 

We consider a rectangular domain $\Omega = [0,\frac{2\pi}{3}] \times
[0,1]$.  The boundary conditions are $p = 0$ on the whole boundary
$\partial \Omega$ and the cavitation pressure is $p_c=0$. The geometry
of the bearing with length-to-radius ratio $L/R = 2$ gives the
following operator and loading:
\begin{equation}
    \label{eq:opnload}
    \Eop{p} = \frac{\partial}{\partial \theta}\left( d^3 \frac{\partial p}{\partial \theta}\right) - \frac{1}{4} \frac{\partial}{\partial y}\left(d^3 \frac{\partial p}{\partial y}\right), \quad f = -6 \frac{\partial d}{\partial \theta},
\end{equation}
where $d(\theta) = 1 + \varepsilon \cos (\theta - \varphi)$,
$\varepsilon = 0.9$ and $\varphi = 0.5483\overline{88}$.
Note that due to the chosen eccentricity, the values of the field
$d^3$ vary approximately three orders of magnitude inside the
domain.

We solve the described problem using the Nitsche's and penalty methods
with linear and quadratic elements. Solutions using higher order
elements can be computed by our implementation but their use is
counterproductive since the solution has limited regularity.  Recall
that the penalty method for this problem is essentially the method of
Problem~\ref{prob:nitsche} with the definitions
\begin{align*}
  \Omega_h^c(r_h) &= \{(x,y) \in \Omega : p_h < 0 \}, \\
    a_h(p_h,q_h; r_h) &= \int_{\Omega_h^c(r_h)}\!\! \tfrac{1}{\epsilon \mathcal{H}^{k+1}} p_h q_h \dx, \\
  L_h(q_h; r_h) &= 0,
\end{align*}
where $k$ is the polynomial degree of the finite element basis and $\epsilon >
0$ is the penalty parameter.  The error estimator for the penalty method is
\begin{equation}
    \mathcal{E}_K^2 = \frac{h_K^2}{d_K^3} \|\Eop{p_h}+\tfrac{1}{\epsilon \mathcal{H}^{k+1}}(-p_h)_++f\|_{0,K}^2 + \frac12 \frac{h_E}{d_E^3} \| d^3 \llbracket \nabla p_h \cdot \boldsymbol{n} \rrbracket \|_{0,\partial K}^2.
\end{equation}
In each case the mesh is repeatedly refined as described in
Section~\ref{sec:adaptive}. The refinement parameter is chosen as $\beta =
0.5$. The stabilization parameter for the Nitsche's method is taken to be $\alpha
= 10^{-2}$ which is suitably small for both polynomial degrees. The penalty
parameter is chosen quite large,  $\epsilon = 10$, since  smaller values seem to cause
problems for the convergence of the iteration process described in
Remark~\ref{rem:iteration}---possibly due to the increasing condition number
of the system matrix. The
iteration counts with respect to the stabilization/penalty parameter
are comparable between the two methods. In the
penalty method, the number of iterations can be reduced by increasing the value of the
penalty parameter which, on the other hand, decreases the accuracy.
Nitsche's method converges fastest if
the stabilization parameter is chosen to be close to its upper bound
$C_I$.

The discrete solution, computed with a fine mesh, is visualized in
Figure~\ref{fig:lipsum0}. 
The pressure isolines depicted in Raimondi--Boyd~\cite{raimondi1958solution} are very similar in shape to ours,
and their maximum pressure value 32.8  is close to the one obtained
by our implementation, $\max_{\boldsymbol{x} \in \Omega} p_h(\boldsymbol{x})
\approx 32.750$. This suggests that the operator and the loading given in
\eqref{eq:opnload} have been interpreted correctly for the problem at hand.

Starting with the initial mesh given in Figure~\ref{fig:lipsum1}~(a), we solve,
mark and refine four times using the four different methods. The resulting
meshes are shown in Figure~\ref{fig:lipsum1}~(b--e).  Observe that, with linear elements,  the 
meshes of the stabilized and penalty methods  are very similar except for
 some  refinement in the penalty solution in the upper and
lower right-hand corners of the cavitated region where, at least intuitively, one would not expect
 a constant solution needing additional triangles.
The problem of unnecessary refinement is more visible when using quadratic
elements.  The Nitsche's method leads to a satisfactory mesh with an emphasis on
the cavitation boundary where the solution is less regular. The penalty method
refines primarily and incorrectly in the area where the solution is known to be
constant.

To demonstrate this effect in a more quantitative manner, we start
with a coarser mesh and plot the square root of the sum of the error estimators
as a function of the number of degrees of freedom, see Figure~\ref{fig:error}.
Note that the sum of the estimators is known to be an upper bound for the true
error as indicated by the a posteriori estimates. Again  with
linear elements both methods perform quite similarly whereas in the quadratic case the
Nitsche's method has a clear edge. The explanation lies in the non-consistency
of the penalty approach where the consistency error ultimately limits the rate
of convergence, cf.~\cite{gustafsson2017obstacle}.

\section{Conclusions}

A stabilized method was presented for the adaptive finite element solution of an inequality-constrained Reynolds equation, modeling cavitation in hydrodynamic lubrication, as an alternative to the classical penalty approach.  Optimal a priori estimates and new a posteriori error estimators, suitably modified for the governing variable-coefficient obstacle problem, were discussed and the latter were used in numerical experiments. 

The positivity constraint on the pressure field was  imposed using a Lagrange multiplier which, for being discontinuous, can be eliminated elementwise at the discrete level. Hence, the stabilized method could be implemented as a Nitsche-type method which, contrary to the penalty approach, leads to an optimally conditioned, symmetric and positive-definite stiffness matrix. The Nitsche's method was shown to avoid over-refinement and be well suited for resolving the free boundary related to the initially unknown cavitation region and thus win over the classical penalty method, especially if a quadratic basis is employed. 
  
\medskip

\noindent{\bf Acknowledgements.}
Funding from Tekes -- the Finnish Funding Agency for Innovation (Decision number 3305/31/2015) and the Finnish Cultural
    Foundation is gratefully acknowledged, as well as the financial support from FCT/Portugal through UID/MAT/04459/2013 and from the UT Austin-Portugal CoLab Program.

\begin{figure}
    \centering
    \includegraphics[width=\textwidth]{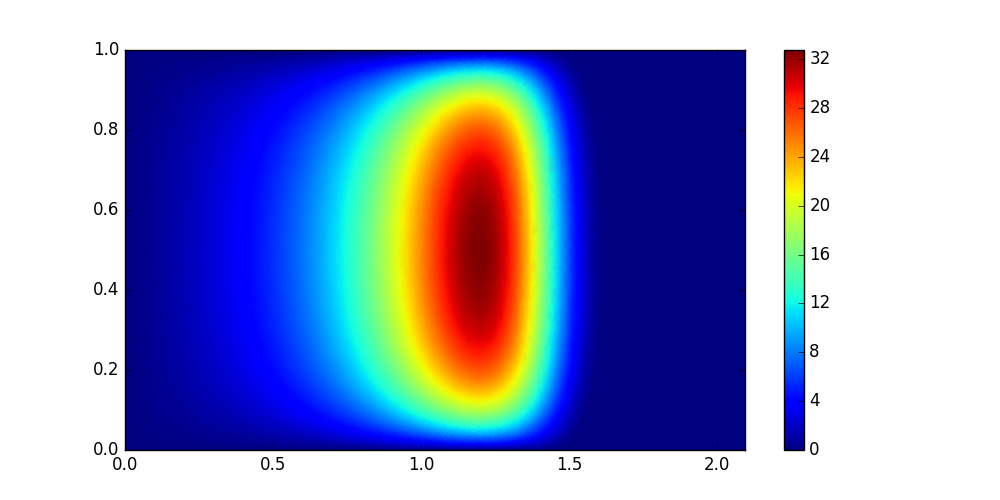}\\
    \includegraphics[width=0.64\textwidth]{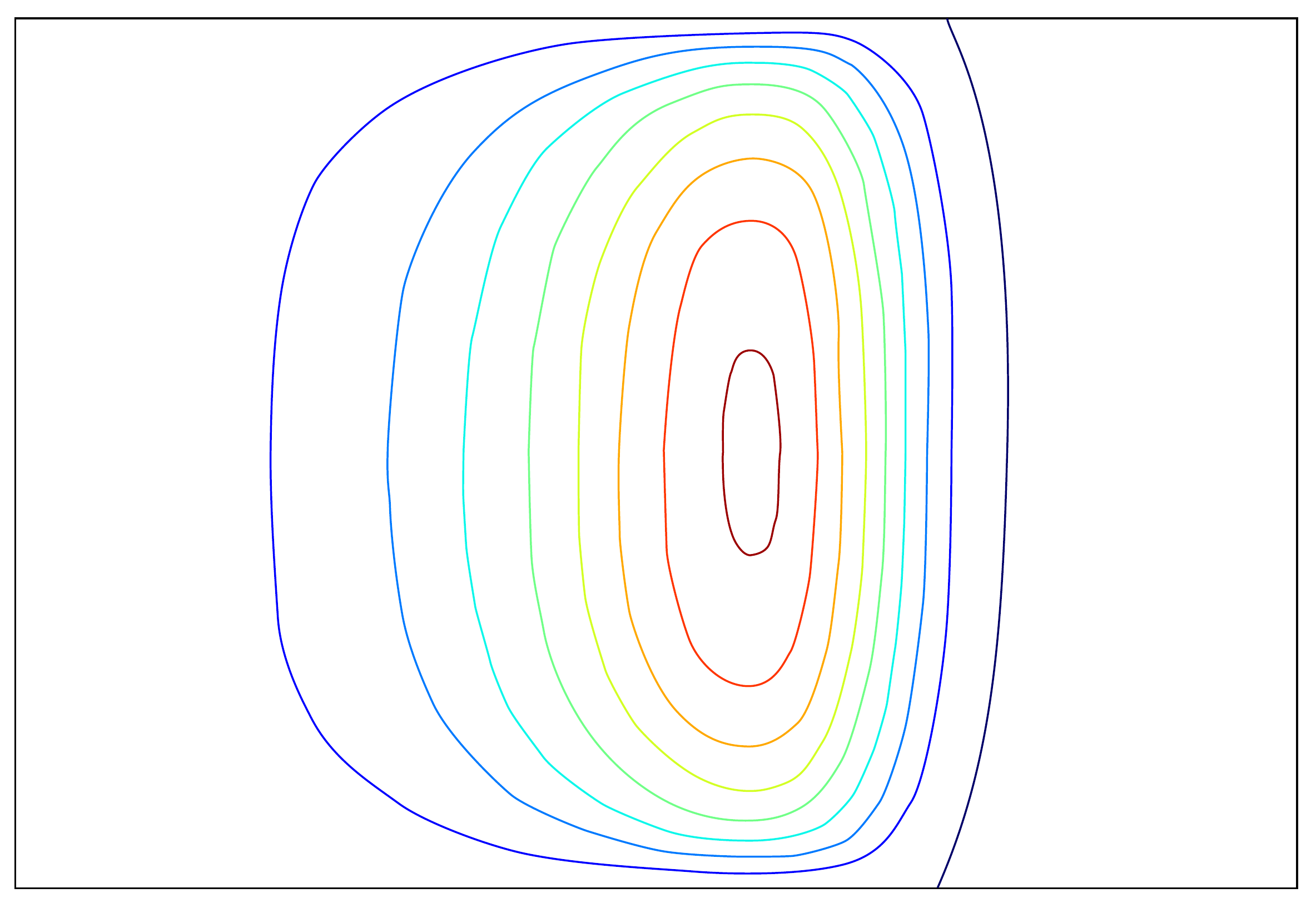}\hspace{1.5cm}
    \caption{The discrete solution after seven refinements using Nitsche's
        method and linear elements is given in the upper panel. The lower panel
        depicts the pressure isolines at values $0, 4, 8, \dots, 32$ as given in
        Raimondi--Boyd~\cite{raimondi1958solution} where the authors report a
        maximum pressure value of 32.8. Our adaptive method gives the maximum
        value
of 32.750.}
    \label{fig:lipsum0}
\end{figure}

\begin{figure}
    \centering
    \subcaptionbox{Initial mesh}{\includegraphics[width=0.48\textwidth]{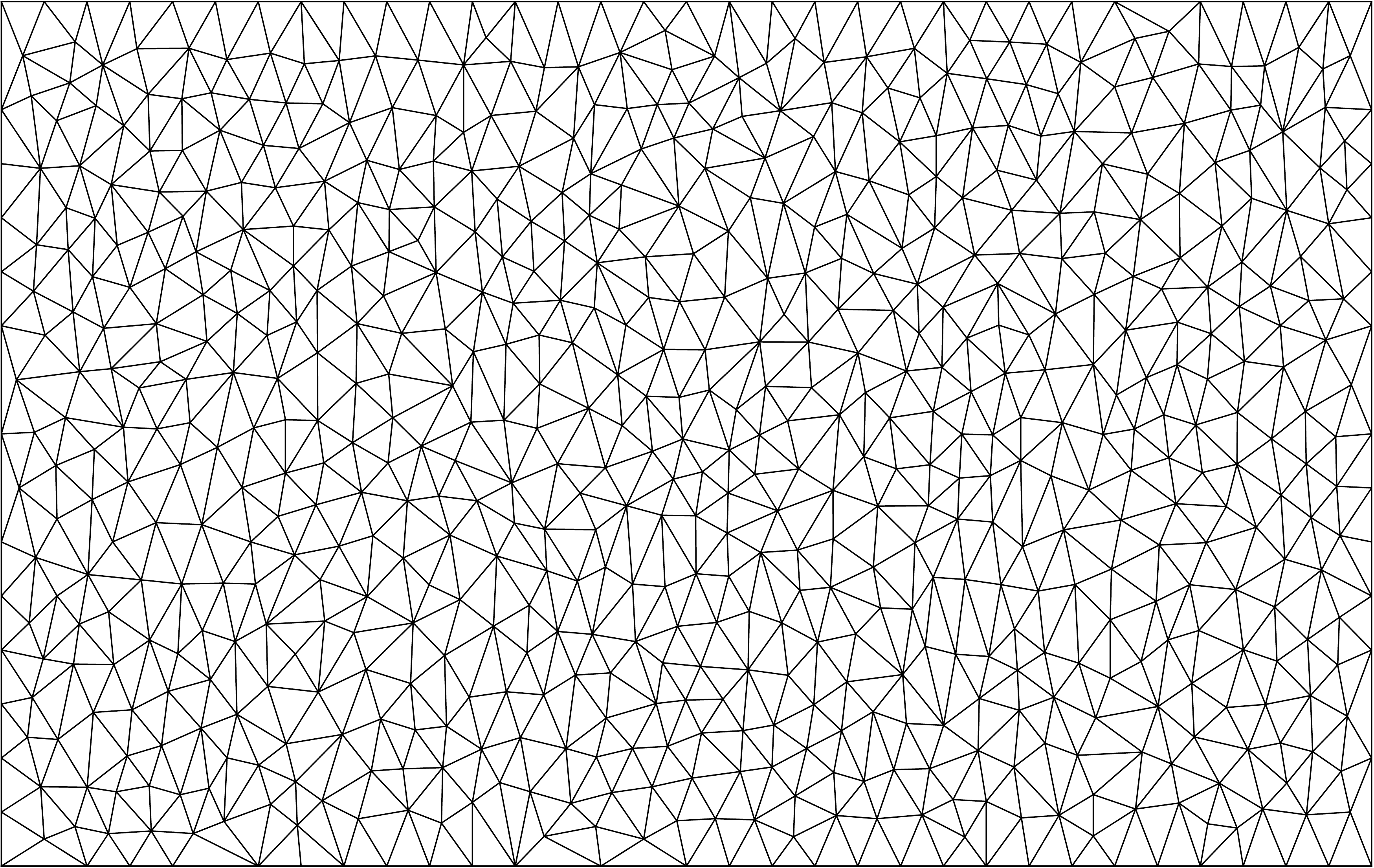}}\\[0.3cm]
    \subcaptionbox{Nitsche, linear elements}{\includegraphics[width=0.48\textwidth]{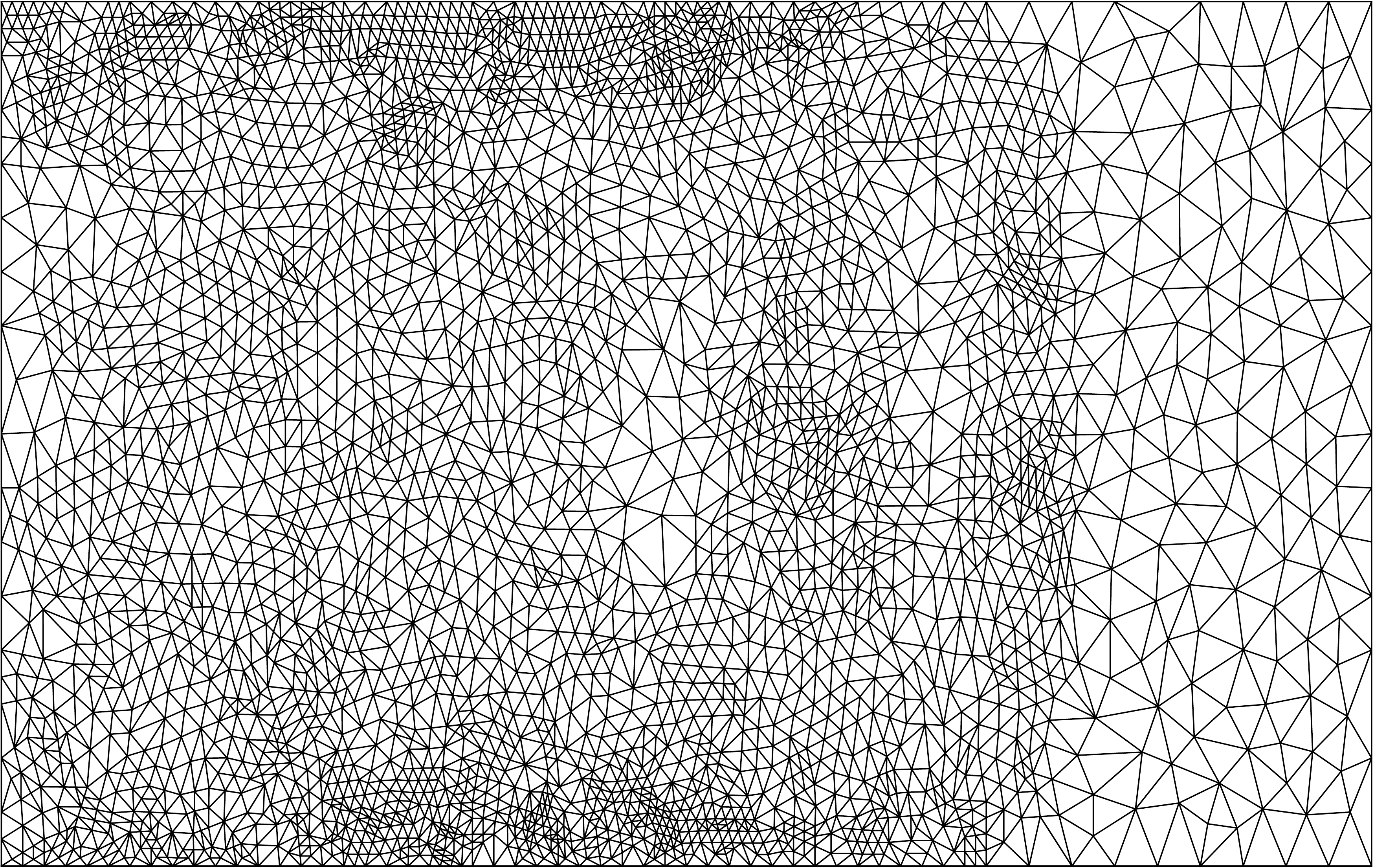}}\hfill
    \subcaptionbox{Nitsche, quadratic elements}{\includegraphics[width=0.48\textwidth]{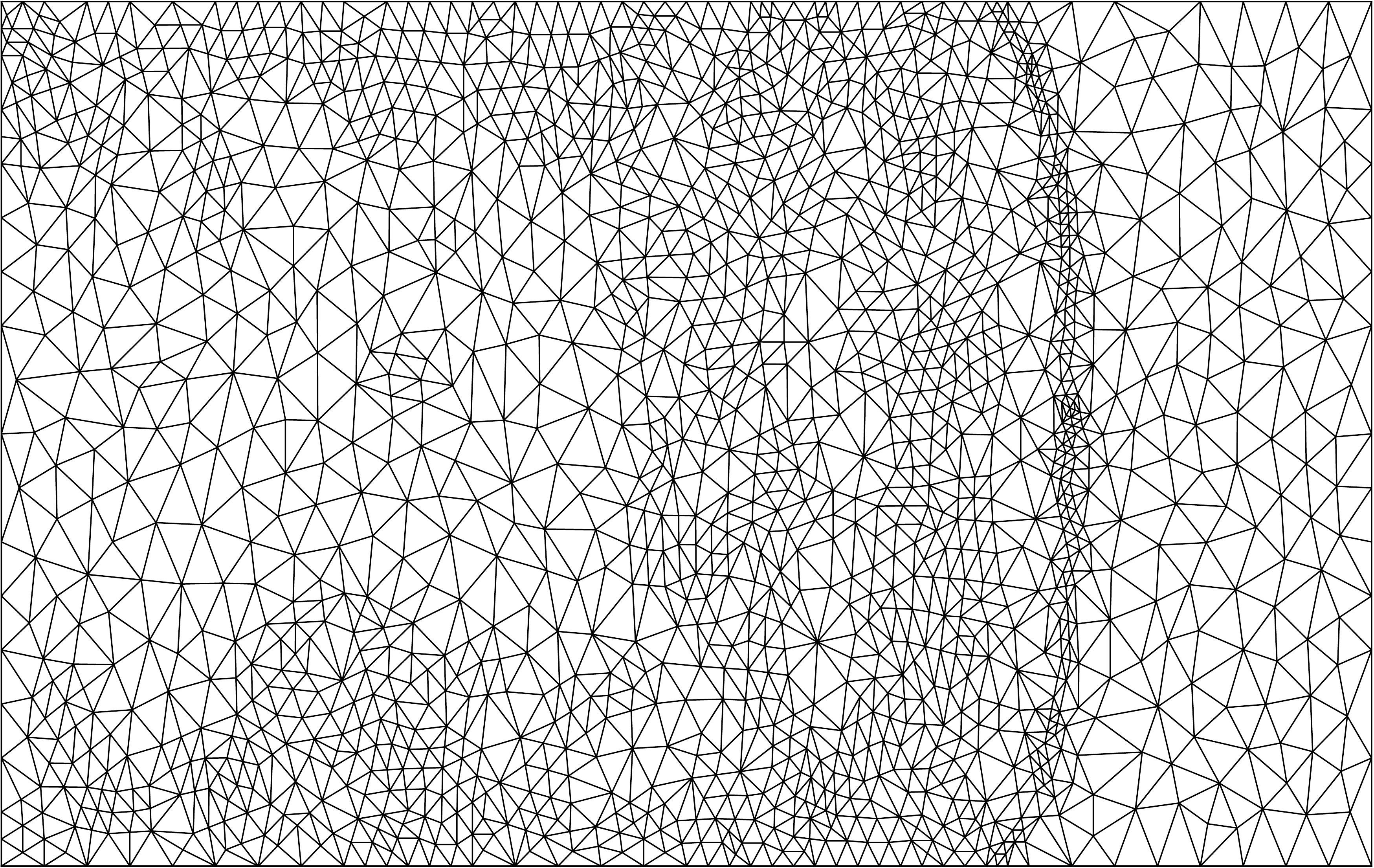}}\\[0.3cm]
    \subcaptionbox{Penalty, linear elements}{\includegraphics[width=0.48\textwidth]{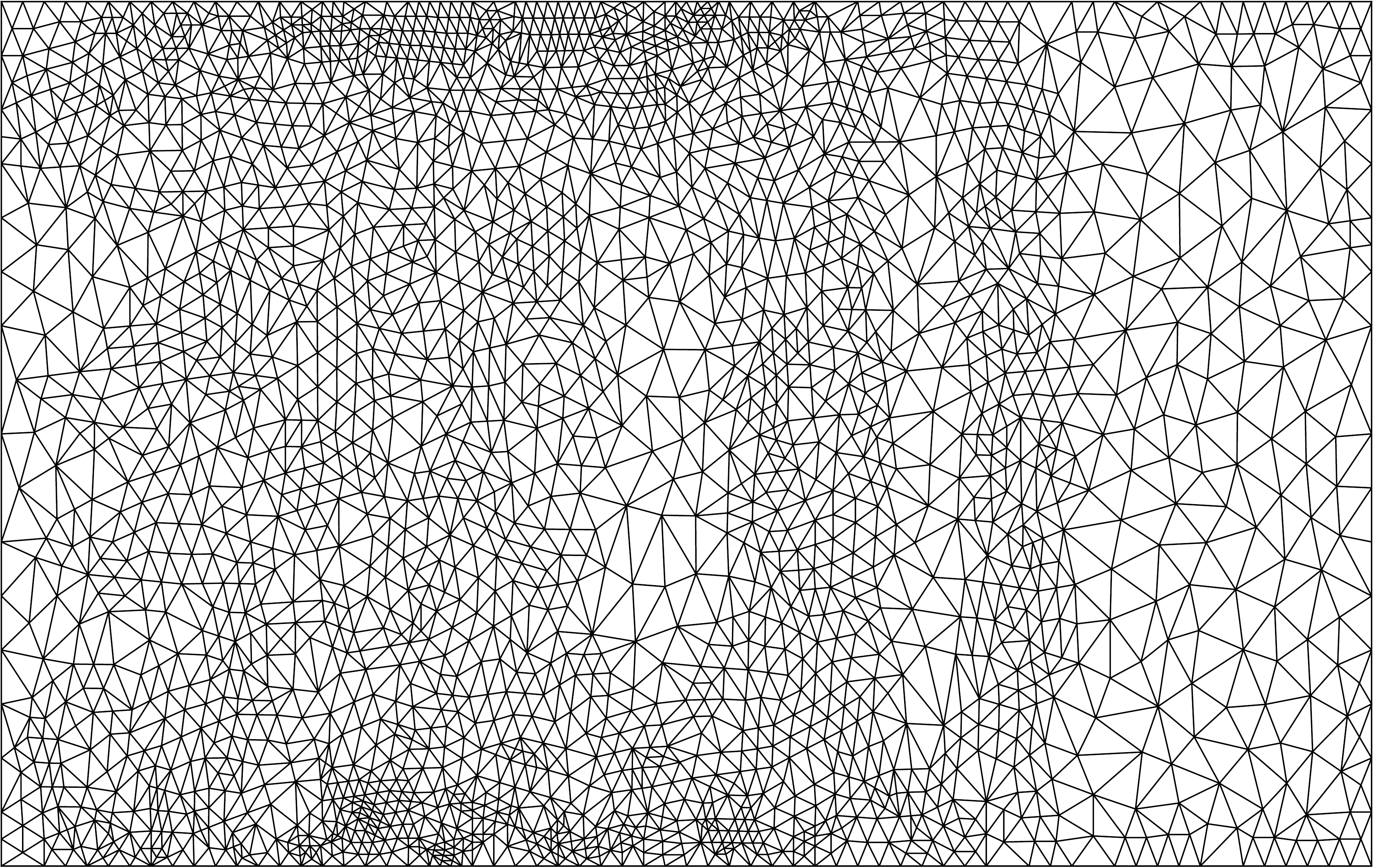}}\hfill
    \subcaptionbox{Penalty, quadratic elements}{\includegraphics[width=0.48\textwidth]{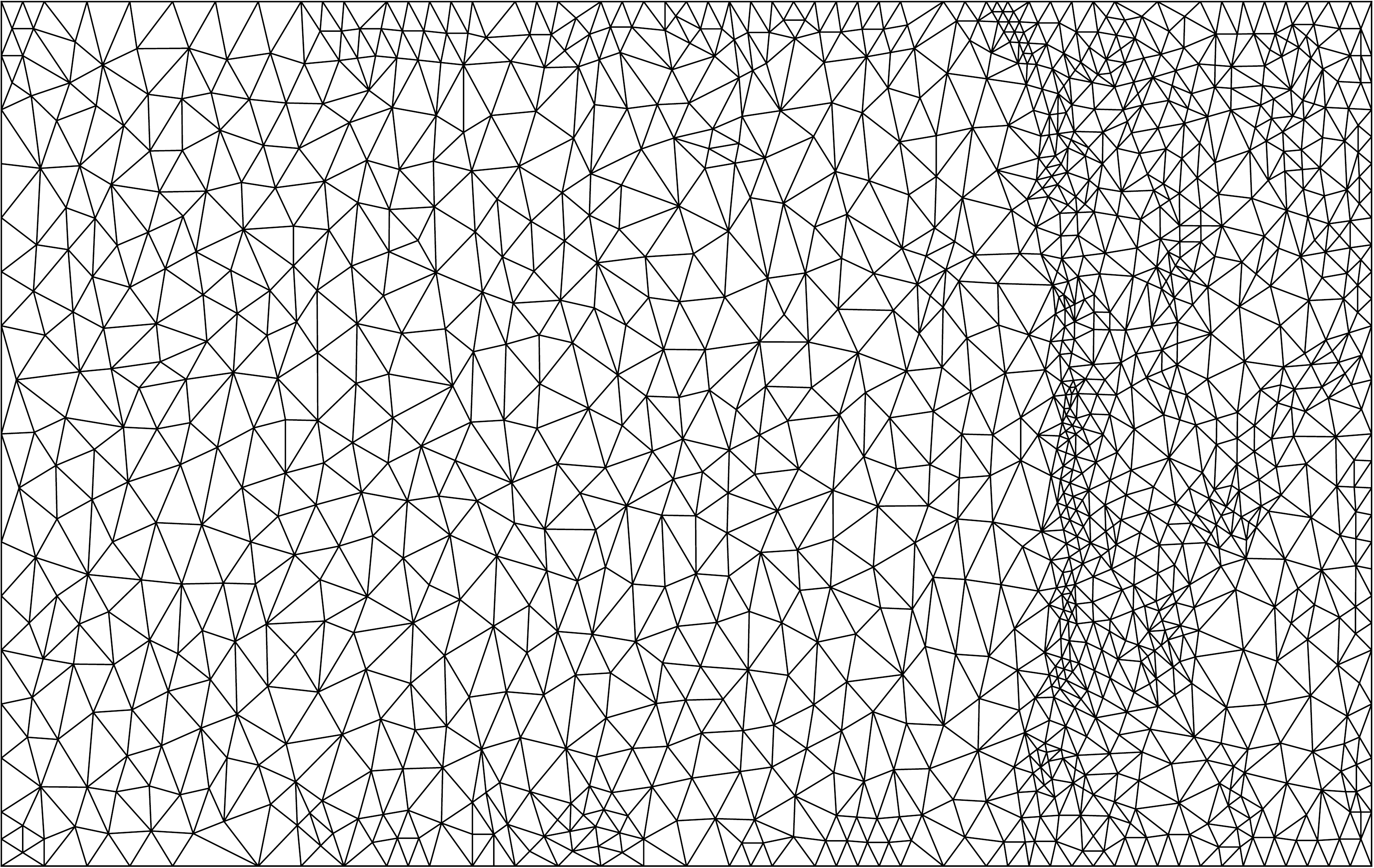}}
    \caption{Initial and final meshes of the different methods after four mesh
    refinements.}
    \label{fig:lipsum1}
\end{figure}

\pgfplotstableread{
    ndofs eta
    51 8.94040244036
    129 5.56426178412
    249 3.89443152365
    422 2.96459822407
    796 2.14421652957
    1442 1.58848630981
    1901 1.37390993234
    3372 1.04862845095
}\nitschelinear

\pgfplotstableread{
    ndofs eta
    51 9.7979073614
    138 5.94510800317
    298 3.87671199846
    483 3.04270597908
    939 2.17230064832
    1101 1.96681845357
    2331 1.35959753088
}\penaltylinear

\pgfplotstableread{
    ndofs eta
    181 3.58236796758
    250 2.29024071127
    463 1.17115475304
    666 0.802267828735
    1190 0.44398170033
    1533 0.311389566186
    2370 0.199833946208
    3819 0.122491529401
}\nitschequadratic

\pgfplotstableread{
    ndofs eta
    181 4.29639181502
    260 3.15322349584
    517 1.70139802359
    887 1.0392762071
    1473 0.648802573383
    2143 0.462643186662
    2587 0.390153530888
    3846 0.300306171689
}\penaltyquadratic

\begin{figure}
    \centering
    \hspace{1cm}
    \includegraphics[width=0.48\textwidth]{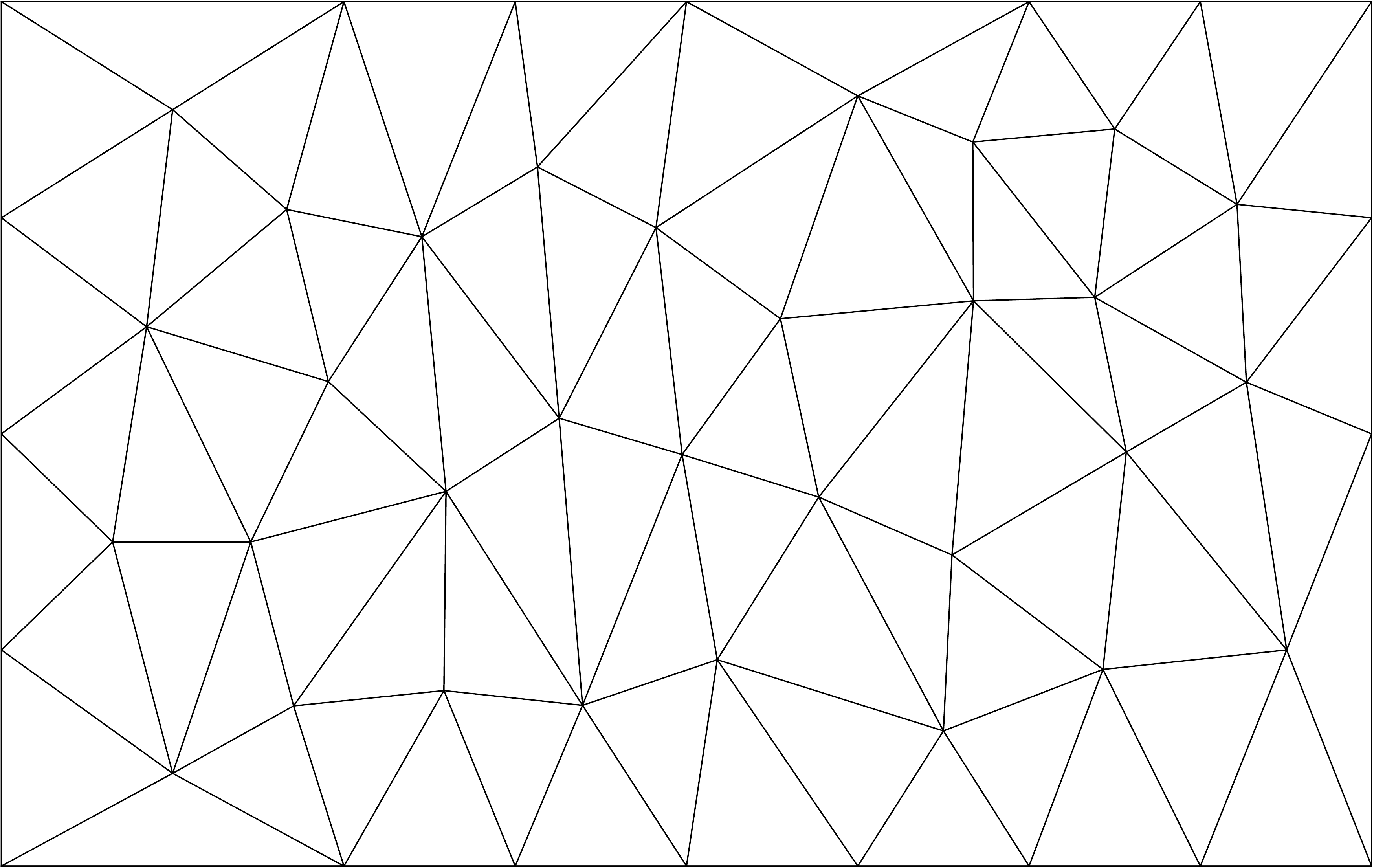}\\[0.4cm]
    \begin{tikzpicture}[scale=0.7]
        \begin{axis}[
                xmode = log,
                ymode = log,
                xlabel = {degrees of freedom},
                ylabel = {total estimator},
                grid = both
            ]
            \addplot table[x=ndofs,y=eta] {\nitschelinear};
            \addplot table[x=ndofs,y=eta] {\penaltylinear};
            \addlegendentry{Nitsche, linear}
            \addlegendentry{Penalty, linear}
        \end{axis}
    \end{tikzpicture}\hfill
    \begin{tikzpicture}[scale=0.7]
        \begin{axis}[
                xmode = log,
                ymode = log,
                xlabel = {degrees of freedom},
                ylabel = {total estimator},
                grid = both
            ]
            \addplot table[x=ndofs,y=eta] {\nitschequadratic};
            \addplot table[x=ndofs,y=eta] {\penaltyquadratic};
            \addlegendentry{Nitsche, quadratic}
            \addlegendentry{Penalty, quadratic}
        \end{axis}
    \end{tikzpicture}
    \caption{The initial mesh (upper panel) and the resulting global error estimators
    for the two methods (lower panels), using linear and quadratic elements.  The global estimator
    is defined as the square root of the sum of the elementwise estimators.}
    \label{fig:error}
\end{figure}


\bibliographystyle{elsarticle-num}
\bibliography{reynolds}

\end{document}